\newenvironment{psmallmatrix}
  {\left(\begin{smallmatrix}}
  {\end{smallmatrix}\right)}
\theoremstyle{plain}
\newtheorem{prop}{Proposition}
\newtheorem{lem}[prop]{Lemma}
\newtheorem{cor}[prop]{Corollary}
\theoremstyle{definition}
\newtheorem{rem}[prop]{Remark}
\newtheorem{ex}[prop]{Example}
\title{Overpartition $M2$-rank differences, class number relations, and vector-valued mock Eisenstein series}
\author{Brandon Williams}
\subjclass[2010]{11E41,11F27,11F37}
\address{Department of Mathematics \\ University of California \\ Berkeley, CA 94720}
\email{btw@math.berkeley.edu}
\begin{document}

\nocite{*}

\maketitle

\begin{abstract} We prove that the generating function of overpartition $M2$-rank differences is, up to coefficient signs, a component of the vector-valued mock Eisenstein series attached to a certain quadratic form. We use this to compute analogs of the class number relations for $M2$-rank differences. As applications we split the Kronecker-Hurwitz relation into its ``even" and ``odd" parts and calculate sums over Hurwitz class numbers of the form $\sum_{r \in \mathbb{Z}} H(n - 2r^2)$.
\end{abstract}

\section{Introduction and statement of results}

An \textbf{overpartition} of $n \in \mathbb{N}_0$ is a partition in which the first occurance of a number may be overlined (distinguished). Equivalently, an overpartition of $n$ is a decomposition $n = n_1 + n_2$, with $n_1,n_2 \ge 0$, together with a partition of $n_1$ into distinct parts (the overlined numbers) and an arbitrary partition of $n_2$.
For example, the $8$ overpartitions of $n = 3$ are $$3 = \overline{3} = 2 + 1 = \overline{2} + 1 = 2 + \overline{1} = \overline{2} + \overline{1} = 1 + 1 + 1 = \overline{1} + 1 + 1.$$

In \cite{L}, Lovejoy introduced the $M2$-rank statistic $$M2\text{-rank}(\lambda) = \Big\lceil \frac{\ell(\lambda)}{2} \Big\rceil - n(\lambda) + n(\lambda_0) - \chi(\lambda),$$ where $\ell(\lambda)$ denotes the largest part of the overpartition $\lambda$, $n(\lambda)$ is the number of parts, $n(\lambda_o)$ is the number of odd non-overlined parts, and $$\chi(\lambda) = \begin{cases} 1: & \text{the largest part of $\lambda$ is odd and non-overlined}; \\ 0: & \text{otherwise}. \end{cases}$$ Let $M2_e(n)$ and $M2_o(n)$ denote the number of overpartitions of $n$ with even $M2$-rank resp. odd $M2$-rank and define the $M2$-rank difference $$\overline{\alpha}_2(n) = M2_e(n) - M2_o(n).$$ Bringmann and Lovejoy proved \cite{BL} that the generating function $$\overline{f_2}(q) = \sum_{n=0}^{\infty} \overline{\alpha}_2(n) q^n = 1 + 2q + 4q^2 - 2q^4 + 8q^5 + 8q^6 + ...$$ is a mock modular form of weight $3/2$, i.e. the holomorphic part of a harmonic weak Maass form. \\

It was remarked in \cite{W1} that the generating function for the difference counts between overpartitions with even and odd (Dyson's) ranks is, up to the signs of the coefficients, the $\mathfrak{e}_0$-component of a vector valued mock Eisenstein series for the dual Weil representation attached to the quadratic form $Q(x,y,z) = x^2 + y^2 - z^2$ on $\mathbb{Z}^3$ in a sense to be explained in section 2 below. (It is also closely related to the mock Eisenstein series attached to the quadratic form $Q(x) = 2x^2$.) It was observed there that the function $\overline{f_2}$ seemed to be similarly related to the mock Eisenstein series attached to the quadratic form $Q_2(x,y,z) = 2x^2 - y^2 + 2z^2$ (or equivalently $2x^2 + 2y^2 - z^2$). \\

The first purpose of this note is to prove the observation in the previous paragraph.

\begin{prop} The series $$1 - \sum_{n=1}^{\infty} |\overline{\alpha}_2(n)| q^n = 1 - 2q - 4q^2 - 2q^4 - 8q^5 - 8q^6 - ...$$ is the $\mathfrak{e}_0$-component of the mock Eisenstein series for the dual Weil representations attached to the quadratic form $Q_2(x,y,z) = 2x^2 - y^2 + 2z^2$ and to the quadratic form $\tilde Q_2(x,y,z) = 2x^2 - 2y^2 + z^2$.
\end{prop}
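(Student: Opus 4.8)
The strategy is to identify both sides as mock modular forms of weight $3/2$ and match them using a finite computation. First I would recall the explicit construction of the vector-valued mock Eisenstein series $E$ attached to the dual Weil representation $\rho^*$ of a quadratic form; this is the object whose shadow is a specified vector-valued Eisenstein series of weight $1/2$ (the ``nonholomorphic completion'' being governed by the theory in \cite{BK}, \cite{W1}). For the quadratic form $Q_2(x,y,z) = 2x^2 - y^2 + 2z^2$ on $\mathbb{Z}^3$ (signature $(2,1)$, so the relevant weight is $3/2$), the discriminant form is a finite abelian group of small order, and the $\mathfrak{e}_0$-component $E_0(\tau) = \sum_{n} c(n) q^n$ has coefficients $c(n)$ that are computable either from the Bruinier--Kuss type formulas or from the recursion satisfied by mock Eisenstein series. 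I would do the same for $\tilde Q_2(x,y,z) = 2x^2 - 2y^2 + z^2$, noting that $Q_2$ and $\tilde Q_2$ have isomorphic discriminant forms (both reduce, after scaling, to the same genus), so their mock Eisenstein series coincide; this handles the ``or'' in the statement essentially for free.

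Next I would bring in the known modularity of $\overline{f_2}(q) = \sum \overline{\alpha}_2(n) q^n$ from Bringmann--Lovejoy \cite{BL}: it is the holomorphic part of a harmonic weak Maass form of weight $3/2$ on $\Gamma_0(N)$ for an explicit $N$, with an explicitly described shadow (a linear combination of unary theta functions of weight $1/2$). The sign change $\overline{\alpha}_2(n) \mapsto -|\overline{\alpha}_2(n)|$ should be explained by twisting: one checks that $\overline{\alpha}_2(n)$ has a fixed sign depending only on $n$ modulo a small modulus (the pattern $+,-,-,0,-,-,\dots$ visible in the $q$-expansion), and that multiplying the $n$-th coefficient by the appropriate sign corresponds to the action of a quadratic twist, which on the vector-valued side is exactly the passage between the Weil representation $\rho$ and its dual $\rho^*$ (equivalently between $Q_2$ and $-Q_2$). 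After this normalization, $1 - \sum_{n \ge 1} |\overline{\alpha}_2(n)| q^n$ becomes a scalar-valued mock modular form of weight $3/2$ which I claim equals $E_0$.

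To finish, I would assemble a vector-valued mock modular form $F$ of weight $3/2$ for $\rho^*$ whose $\mathfrak{e}_0$-component is the normalized generating function, using the standard dictionary between scalar modular forms on $\Gamma_0(N)$ with character and vector-valued modular forms for the Weil representation (the components being indexed by the isotropic/anisotropic elements of the discriminant form, filled in by the other overpartition $M2$-rank difference functions or by simple theta multiples). Then $F - E$ is a \emph{holomorphic} vector-valued modular form of weight $3/2$ for $\rho^*$, because the shadows agree — this is the crucial point, so I would verify that the shadow of $\overline{f_2}$ computed by Bringmann--Lovejoy matches the weight-$1/2$ Eisenstein series that is the shadow of $E$ (up to the same sign twist). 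The space of holomorphic forms of weight $3/2$ for this representation is finite-dimensional and, by a dimension count, should be zero (or spanned by a single cusp form killed by comparing finitely many coefficients via Sturm's bound \cite{St}). The main obstacle I anticipate is precisely the shadow computation: one must carefully normalize the period integrals / nonholomorphic Eichler integrals on both sides so that the weight-$1/2$ theta series produced by $\xi_{3/2}$ applied to $\overline{f_2}$ is literally identified, component by component and with correct constants, with the Eisenstein series of weight $1/2$ attached to $Q_2$ — sign conventions for the dual Weil representation and the precise lattice (scaling $\mathbb{Z}^3$ versus its dual) are where errors are most likely to hide.
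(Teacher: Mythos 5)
Your overall strategy is the same as the paper's: complete both sides to harmonic Maass forms of weight $3/2$, check that the nonholomorphic parts (shadows) agree, conclude that the difference is a holomorphic modular form, and kill it by a finite coefficient check via Sturm's bound. The genuine gap is in the step where you dispose of the sign change $\overline{\alpha}_2(n) \mapsto -|\overline{\alpha}_2(n)|$. The sign multiplier is $+1$ for $n \equiv 0,3 \pmod 4$ and $-1$ for $n \equiv 1,2 \pmod 4$ (this sign pattern itself is not "checked by inspection'' but follows from the exact formula (1.5) of Bringmann--Lovejoy expressing $\overline{\alpha}_2(n)$ through $H(n)$ and $r_3(n)$). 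This multiplier is nowhere zero and not multiplicative, so it is not a quadratic character twist; and passing between the Weil representation $\rho$ and its dual $\rho^*$ (equivalently between $Q_2$ and $-Q_2$) does not act on the coefficients of a fixed component by signs at all, so your proposed mechanism would fail. The correct device, and the crux of the proof, is the additive twist
$$f(\tau) \;=\; \tfrac{1+i}{2}\,\overline{f_2}(\tau+1/4) \;+\; \tfrac{1-i}{2}\,\overline{f_2}(\tau-1/4),$$
together with the observation that the same combination of translates applied to the nonholomorphic part reproduces the \emph{untwisted} period integral of $\Theta$, because $\tfrac{1+i}{2}\Theta(t-1/4)+\tfrac{1-i}{2}\Theta(t+1/4)=\Theta(t)$ (the exponents of $\Theta$ lie in $0,1 \bmod 4$). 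Without this (or an equivalent argument) you have not shown that the sign-changed series is mock modular with shadow proportional to $\Theta$, so the shadow comparison with $E_{3/2}(\tau;Q_2)$ cannot even be set up; note also that the Eisenstein shadow must actually be computed (the paper evaluates the local $L$-series at $n=0$ and identifies the shadow as $-4\vartheta_1-4\vartheta_2$ using Skoruppa's basis of weight $1/2$ forms, giving $\mathfrak{e}_0$-component $-8\Theta$ with the precise constant needed).

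Two smaller points. Your shortcut of treating $\tilde Q_2$ via an isomorphism of discriminant forms is legitimate in principle (the two lattices are indeed in one genus, as a $2$-adic symbol computation confirms), but it must be verified rather than asserted; the paper instead redoes the local computation and Skoruppa-basis identification for $\tilde Q_2$. And the hope that the relevant space of holomorphic weight-$3/2$ forms is zero by a dimension count is false (at level $16$ it contains theta series), so the finite coefficient comparison up to the Sturm bound is not optional but is the actual concluding step.
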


As in the examples of \cite{W2}, calculating the other components of the mock Eisenstein series leads to ``class number relations" by comparing coefficients of certain vector-valued modular or quasimodular forms of weight $2$. (This is very similar to the arguments of \cite{BK2} that use mixed mock modular forms to derive relations among class numbers.) For $n \in \mathbb{N}$, let $\sigma_1(n) = \sum_{d | n} d$, $\lambda_1(n) = \frac{1}{2}\sum_{d | n} \min(d,n/d)$ and let $H(n)$ denote the Hurwitz class number. The identities we find in this case are:

\begin{prop} Define $$\alpha(n) = \sum_{r \in \mathbb{Z}} |\overline{\alpha}_2(n-r^2)| + 4 \sum_{r \, \mathrm{odd}} H(4n-r^2) + 4 \lambda_1(n) - \begin{cases} 4: & n = \square; \\ 0: & \mathrm{otherwise}. \end{cases}$$ Then $\alpha(n)$ satisfies the identity $$\alpha(n) = \begin{cases} 8 \sigma_1(n/2) + 16 \sigma_1(n/4): & n \equiv 0 \, \bmod \, 4; \\ 24 \sigma_1(n/2): & n \equiv 2 \, \bmod \, 4; \\ 4 \sigma_1(n): & n \equiv 1 \, \bmod \, 2. \end{cases}$$
\end{prop}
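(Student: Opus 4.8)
The plan is to realize $\alpha(n)$ as the $n$-th Fourier coefficient of the holomorphic projection of a product of the weight-$3/2$ mock Eisenstein series of Proposition~1 with a weight-$1/2$ theta series, and then to identify that holomorphic projection with an explicit Eisenstein series of weight $2$; this is the method of \cite{W2}, \cite{W3}, which in turn mirrors the scalar-valued arguments of Mertens \cite{M} and Bringmann--Kane \cite{BK2}.

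First I would assemble the full Fourier expansion of the vector-valued mock Eisenstein series $E$ of weight $3/2$ attached, for the dual Weil representation, to $Q_2(x,y,z)=2x^2-y^2+2z^2$. By Proposition~1 its $\mathfrak{e}_0$-component is $1-\sum_{n\ge 1}|\overline{\alpha}_2(n)|q^n$. The remaining components are genuine Eisenstein series whose coefficients, by the formulas of Bruinier--Kuss \cite{BK} (in the spirit of Hirzebruch--Zagier \cite{HZ}), are representation numbers of the finite quadratic module of $Q_2$; for this ternary form these evaluate in terms of Hurwitz class numbers, and collecting the components lying over a fixed square class of the discriminant form produces exactly the sum $4\sum_{r\ \mathrm{odd}}H(4n-r^2)$. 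I would also record the shadow of $E$: since Bringmann--Lovejoy \cite{BL} exhibit $\overline{f_2}$ as mock modular with a unary theta shadow, the completion $\widehat{E}=E+(\text{non-holomorphic part})$ has shadow a vector-valued theta series of weight $1/2$.

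Next, multiply $\widehat{E}$ by the weight-$1/2$ vector-valued theta series $\Theta$ built from $\sum_{r\in\mathbb{Z}}q^{r^2}$, normalized and paired with $\widehat{E}$ so that the resulting object $\langle\widehat{E},\Theta\rangle$ transforms as a real-analytic (vector-valued) modular form of weight $2$ of level dividing $4$. Applying holomorphic projection \cite{ORR}, \cite{M} to this weight-$2$ form yields a quasimodular form of weight $2$. The non-holomorphic part of $\widehat{E}$ contributes, after holomorphic projection, precisely a correction of Lambert type: this is the standard Mertens-type computation \cite{M}, \cite{BK2} — the holomorphic projection of (theta)$\cdot\widehat{(\text{mock form with theta shadow})}$ is governed by $\sum_{d\mid n}\min(d,n/d)$ — and it produces the term $4\lambda_1(n)$. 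The holomorphic part $E$ itself contributes, by the expansion from the first step, $\sum_r|\overline{\alpha}_2(n-r^2)|$ from the $\mathfrak{e}_0$-component times $\Theta$ (the constant $1$ in that component, times $\Theta$, accounting for the indicator term $-4\cdot[n=\square]$ once the normalization of $\Theta$ and the pairing, which double certain contributions, are taken into account) together with $4\sum_{r\ \mathrm{odd}}H(4n-r^2)$ from the remaining components times $\Theta$. Thus the $n$-th coefficient of $\pm\,\pi_{\mathrm{hol}}(\langle\widehat{E},\Theta\rangle)$ is exactly $\alpha(n)$. Finally I would identify this weight-$2$ quasimodular form explicitly: it lies in the finite-dimensional span of $E_2(\tau)$, $E_2(2\tau)$, $E_2(4\tau)$ and the weight-$2$ Eisenstein series of $\Gamma_0(4)$, has a prescribed constant term, and is therefore pinned down by checking finitely many coefficients (Sturm \cite{St}); its coefficients are $8\sigma_1(n/2)+16\sigma_1(n/4)$, $24\sigma_1(n/2)$, and $4\sigma_1(n)$ in the three residue classes. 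Equating the two expressions for the $n$-th coefficient gives the asserted identity.

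The main obstacle is the middle step: correctly bookkeeping the vector-valued structure — which cosets of the discriminant form of $Q_2$ carry the $\mathfrak{e}_0$-data and which carry the class-number data, how the $\pm1$-automorphism of the discriminant form identifies them, and the exact normalization of $\Theta$ so that every constant (the factors of $2$ and $4$, and the square indicator) emerges correctly — together with the verification that the borderline-convergent weight-$2$ holomorphic projection of the shadow-times-$\Theta$ term contributes precisely $4\lambda_1(n)$ and nothing more. Once these normalizations are under control, the identification of the weight-$2$ form and the final coefficient comparison are routine finite checks.
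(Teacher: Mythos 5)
Your proposal is essentially the paper's own proof: the paper also multiplies the completed weight-$3/2$ mock Eisenstein series (for $\tilde Q_2 = 2x^2-2y^2+z^2$, whose discriminant form splits off the unary piece) by the unary theta and takes holomorphic projection, implemented via the Poincar\'e square series formula of \cite{W2}, with the non-$\mathfrak{e}_0$ components identified as Hurwitz class numbers (Lemma 7, via the Bruinier--Kuss local $L$-factors) and the shadow term yielding $\lambda_1(n)$ plus the square indicator through the Pell-type identity. The only cosmetic difference is that the paper pins down the weight-$2$ form as $E_2(\tau;2x^2-2y^2)$ using $S_2(\rho^*)=\{0\}$ and the Kaneko--Zagier structure rather than your direct Sturm-bound check, so this is the same route, not a new one.
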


$\overline{\alpha}_2(n)$ can be expressed in terms of Hurwitz class numbers and representation counts by sums of three squares, using Theorem 1.1 of \cite{BL}. We can use this to isolate the term $\sum_{r \, \mathrm{odd}} H(4n-r^2)$ in the identity above. This is a strengthening of Theorem 1 of \cite{six} which considers the case that $n$ is prime.

\begin{cor} For any $n \in \mathbb{N}$, $$\sum_{r \, \mathrm{odd}} H(4n - r^2) = \begin{cases} (2/3) \sigma_1(n): & n \, \mathrm{odd}; \\ 4 \sigma_1(n/2) - 2 \lambda_1(n): & n \equiv 2 \, (\bmod\, 4); \\ (2/3) \sigma_1(n) + 2 \sigma_1(n/2) - (8/3) \sigma_1(n/4) + 4 \lambda_1(n/4) - 2 \lambda_1(n): & n \equiv 0 \, (\bmod \, 4). \end{cases}$$
\end{cor}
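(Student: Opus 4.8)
\medskip
\noindent\textbf{Proof strategy.} The plan is to take the identity for $\alpha(n)$ in Proposition 1.3 and rewrite $\sum_{r \in \mathbb{Z}} |\overline{\alpha}_2(n - r^2)|$ so that $\sum_{r \, \mathrm{odd}} H(4n - r^2)$ is the only quantity left that is not already an elementary function of $n$. By Theorem 1.1 of \cite{BL}, on each residue class of $m$ modulo a fixed small modulus the coefficient $\overline{\alpha}_2(m)$ is an explicit $\mathbb{Z}$-linear combination $c_1 H(\kappa m) + c_2 r_3(\mu m)$, where $r_3(\ell)$ is the number of representations of $\ell$ as a sum of three squares and $\kappa, \mu, c_1, c_2$ depend only on the class of $m$. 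Since $H \ge 0$ and $r_3 \ge 0$, the sign of $\overline{\alpha}_2(m)$ is determined by the signs of $c_1, c_2$ on that class (this is precisely what the proof of Proposition 1.1 records), so putting absolute values on $c_1, c_2$ gives a closed form for $|\overline{\alpha}_2(m)|$; I would write this down first.

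Substituting $m = n - r^2$ and summing over $r \in \mathbb{Z}$ then splits $\sum_r |\overline{\alpha}_2(n - r^2)|$ into a ``ternary'' part proportional to $\sum_r r_3(\mu(n - r^2))$ and a ``class number'' part proportional to $\sum_r H(\kappa(n - r^2))$. The ternary part is, up to a scalar, the number of representations of a fixed multiple of $n$ by a positive quaternary quadratic form of the shape $(\text{square}) + \mu \cdot (\text{sum of three squares})$; its theta series is a weight-$2$ modular form plus a quasimodular Eisenstein piece, so the count is an explicit combination of $\sigma_1(n), \sigma_1(n/2), \sigma_1(n/4)$ and $\lambda_1(n), \lambda_1(n/4)$, whose precise shape depends on $n \bmod 4$ — this is where the divisor terms and the three-case split in the Corollary originate. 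The class number part, after relabelling the summation index, is a scalar multiple of $\sum_{s \in \mathbb{Z}} H(4(n - s^2)) = \sum_{r \, \mathrm{even}} H(4n - r^2)$, the ``even'' half of the sum in question; I would eliminate it with the classical Kronecker--Hurwitz/Eichler relation $\sum_{r \in \mathbb{Z}} H(4n - r^2) + 2\lambda_1(n) = 2\sigma_1(n)$ (with the normalisation $H(0) = -\tfrac{1}{12}$, $H(-m) = 0$), which expresses $\sum_{r \, \mathrm{even}} H(4n - r^2)$ in terms of $\sigma_1(n), \lambda_1(n)$ and the unknown $\sum_{r \, \mathrm{odd}} H(4n - r^2)$.

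Inserting both evaluations into the identity of Proposition 1.3 turns it into a linear equation in which $\sum_{r \, \mathrm{odd}} H(4n - r^2)$ is the only unknown, every other term being an elementary function of $n$ on each residue class; solving it and simplifying separately for $n$ odd, $n \equiv 2 \bmod 4$, and $n \equiv 0 \bmod 4$ gives the three asserted identities. I expect the only real difficulty to be organisational: tracking the normalisation $H(0) = -\tfrac{1}{12}$, keeping straight which class-number arguments vanish (those that are negative or $\equiv 1, 2 \bmod 4$), handling the $n = \square$ correction in Proposition 1.3, evaluating the quaternary representation numbers correctly, and — most delicately — collecting the $\lambda_1$-contributions, since $\lambda_1$ enters simultaneously through Proposition 1.3, through the Kronecker--Hurwitz relation, and through the quasimodular part of the quaternary theta series, so the constants $4$, $2$, $\tfrac{8}{3}$ in the answer emerge only after those are combined without error. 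A numerical check at small $n$ against the coefficients of $\overline{f_2}$ quoted in the introduction and tabulated values of $H$ would be a sensible final safeguard.
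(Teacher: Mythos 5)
Your overall route is the same as the paper's: start from the identity of Proposition 2, replace $|\overline{\alpha}_2(n-r^2)|$ by the Bringmann--Lovejoy formula (equation (4) of the paper), evaluate the resulting $r_3$-sums by Jacobi's formula and its alternating version, evaluate the class-number sums by classical relations, and solve the resulting linear equation for $\sum_{r\,\mathrm{odd}} H(4n-r^2)$ in each residue class of $n$ modulo $4$. But one step of your plan is wrong and another needed ingredient is missing. Bringmann--Lovejoy give $|\overline{\alpha}_2(m)| = 8H(m) - r_3(m)/3$ for $m \equiv 0,3 \pmod 4$ and $r_3(m)/3$ for $m \equiv 1,2 \pmod 4$, so the class-number part of $\sum_{r} |\overline{\alpha}_2(n-r^2)|$ is $8\sum_{r \in \mathbb{Z}} H(n-r^2)$ (the terms with $n - r^2 \equiv 1,2 \pmod 4$ vanish automatically), with arguments $n-r^2$ --- not $4(n-r^2)$. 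It is therefore \emph{not} the even half $\sum_{r\,\mathrm{even}} H(4n-r^2)$: for $n=3$ the former is $H(3) = 1/3$ while the latter is $H(12) + 2H(8) = 10/3$. Consequently your elimination scheme --- using Kronecker--Hurwitz to trade the even half of $\sum_r H(4n-r^2)$ for the unknown odd half --- is not the right move and would produce incorrect constants. What is actually needed is a direct evaluation of $\sum_r H(n-r^2)$: when $4 \mid n$ this is Kronecker--Hurwitz applied with $n$ replaced by $n/4$, which is precisely where the $\sigma_1(n/4)$ and $\lambda_1(n/4)$ terms of the corollary come from, and when $n$ is odd one needs Eichler's relation $\sum_{r} H(n-r^2) = \tfrac{1}{3}\sigma_1(n) - \lambda_1(n)$, which your proposal never invokes; the unknown $\sum_{r\,\mathrm{odd}} H(4n-r^2)$ enters only through Proposition 2 itself, so no even/odd elimination occurs at all.

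A secondary inaccuracy: the ``ternary'' part does not contribute any $\lambda_1$-terms. For $n \equiv 2 \pmod 4$ it is $\tfrac13\sum_r r_3(n-r^2) = \tfrac13\bigl(8\sigma_1(n) - 32\sigma_1(n/4)\bigr)$ by Jacobi, for $4 \mid n$ it is the negative of this, and for odd $n$ the signs alternate with the parity of $r$, giving $\pm\tfrac13\sum_r (-1)^r r_3(n-r^2) = \pm\tfrac13\cdot(-1)^{(n-1)/2}\,4\sigma_1(n)$; in every case only divisor sums appear, and all $\lambda_1$-contributions come from Proposition 2 and the class-number relations. With the class-number part identified correctly and Eichler's odd-$n$ relation added, your linear-equation strategy does go through and reproduces the paper's proof; as written, the odd-$n$ and $n \equiv 0 \pmod 4$ cases cannot be completed.
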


By a similar argument to the proof of Proposition 2 we can give another ``class number relation" for $\overline{\alpha}_2$ of a different sort:

\begin{prop} Write $n \in \mathbb{N}$ in the form $n = 2^{\nu} m$ with $m$ odd. Then $$\sum_{r \in \mathbb{Z}} |\overline{\alpha}_2(n - 2r^2)| = 2 \sigma_1(n,\chi) \cdot \Big( 2 + \frac{\chi(m)}{2^{\nu}} \Big) + 4 \sum_{N(\mathfrak{a}) = 2n} \Big( |b| - a \Big) + \begin{cases} 4: & 2n = \square; \\ 0: & \mathrm{otherwise}; \end{cases}$$ where $\chi$ is the Dirichlet character modulo $8$ given by $\chi(1) = \chi(7) = 1$ and $\chi(3) = \chi(5) = -1$; and $\sigma_1(n,\chi)$ is the twisted divisor sum $$\sigma_1(n,\chi) = \sum_{d | n} \chi(n/d) d;$$ and $\mathfrak{a}$ runs through all ideals of $\mathbb{Z}[\sqrt{2}]$ having norm $2n$, and $a + b \sqrt{2} \in \mathfrak{a}$ is a generator with minimal trace $2a > 0$.
\end{prop}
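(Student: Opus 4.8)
The plan is to follow the proof of Proposition 2, but to tensor the mock Eisenstein series with the theta series of the positive-definite unary lattice $(\mathbb{Z},\, x\mapsto 2x^2)$ instead of $(\mathbb{Z},\, x\mapsto x^2)$. By Proposition 1 we may write $1 - \sum_{n\ge 1}|\overline{\alpha}_2(n)|q^n$ as the $\mathfrak{e}_0$-component of the holomorphic part of a harmonic Maass form $\widehat{E}$ of weight $3/2$ for the dual Weil representation $\rho$ attached to $Q_2(x,y,z) = 2x^2 - y^2 + 2z^2$, whose shadow $\xi_{3/2}\widehat{E}$ is an explicit multiple of a vector-valued weight-$1/2$ unary theta series. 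Write $\widehat{E} = E + E^{-}$ with $E$ the holomorphic (mock) part. First I would form the product $\widehat{E}\otimes\theta$ with $\theta(\tau) = \sum_{r\in\mathbb{Z}}q^{2r^2}$: this is a non-holomorphic modular form of weight $2$ for $\rho$ whose $\mathfrak{e}_0$-component has holomorphic part $\big(1 - \sum_{n\ge1}|\overline{\alpha}_2(n)|q^n\big)\theta(\tau) = 1 - \sum_{n\ge1}\big(\sum_{r\in\mathbb{Z}}|\overline{\alpha}_2(n-2r^2)|\big)q^n$.

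Next I would compute the holomorphic projection $\pi_{\mathrm{hol}}(\widehat{E}\otimes\theta)$. Since the relevant nebentypus (the character $\chi$ modulo $8$) is nontrivial, I expect the projection to be an honest holomorphic weight-$2$ modular form for $\rho$ — with no quasimodular $E_2$-term appearing — namely a vector-valued Eisenstein series of weight $2$ and character $\chi$, which I would identify from its first several Fourier coefficients using a Sturm-type bound together with the Bruinier--Kuss formula \cite{BK}; its $\mathfrak{e}_0$-coefficients are $2\sigma_1(n,\chi)\big(2 + \chi(m)/2^{\nu}\big)$, the factor $2 + \chi(m)/2^{\nu}$ reflecting the $2$-adic local density. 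On the other hand $\pi_{\mathrm{hol}}(\widehat{E}\otimes\theta) = E\otimes\theta + \pi_{\mathrm{hol}}(E^{-}\otimes\theta)$, and the correction term $\pi_{\mathrm{hol}}(E^{-}\otimes\theta)$ I would evaluate with the now-standard lemma on holomorphic projections of a product of the non-holomorphic Eichler integral of a weight-$3/2$ unary theta series with a weight-$1/2$ unary theta series (cf. Mertens \cite{M}, and \cite{W2,W3}). This produces, in the $\mathfrak{e}_0$-component, a linearly weighted sum over the lattice points of the indefinite norm form $a^2 - 2b^2$ of discriminant $8$; grouping $a + b\sqrt{2}$ into ideals $\mathfrak{a}\subseteq\mathbb{Z}[\sqrt{2}]$ of norm $2n$ and taking the generator of minimal positive trace $2a$ turns this into $4\sum_{N(\mathfrak{a}) = 2n}(|b| - a)$, together with the diagonal boundary term $4\cdot[\,2n = \square\,]$. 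Equating the two expressions for the $\mathfrak{e}_0$-component and solving for $\sum_{r\in\mathbb{Z}}|\overline{\alpha}_2(n - 2r^2)|$ gives the claimed identity.

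The main obstacle I anticipate is the precise bookkeeping in the holomorphic-projection correction term: one must carry the correct normalization and signs of the shadow of $\widehat{E}$ (equivalently, the error term in the near-modularity of $\overline{f_2}$) through the Mertens-type lemma and then match the resulting lattice-point sum over $a^2 - 2b^2$ to the clean parametrization by ideals of $\mathbb{Z}[\sqrt{2}]$ with the ``minimal positive trace'' normalization; in particular the appearance of $|b| - a$ (rather than a minimum, as in the rational case governing $\lambda_1$ in Proposition 2) is dictated by the weight shift $3/2 \to 2$ and should be verified carefully. A secondary technical point is pinning down the exact level (a divisor of $32$) and verifying that no genuine cusp form enters the weight-$2$ comparison, so that matching finitely many coefficients forces the identity; this should follow from the usual dimension/Sturm-bound argument once the first few coefficients are in hand.
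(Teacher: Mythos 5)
Your strategy---complete the mock modular form, multiply by the unary theta function of $2x^2$, holomorphically project, and evaluate the shadow contribution as an ideal sum in $\mathbb{Z}[\sqrt{2}]$---is in spirit exactly what the paper does, but your implementation differs in a genuine way. The paper stays vector-valued: it compares the Poincar\'e square series $Q_{2,2,0}$ with the weight-two Eisenstein series $E_2(\tau;Q)$ for $Q(x,y)=2x^2-y^2$ (automatically a true modular form since $|A(Q)|=8$ is not a square, with $\mathfrak{e}_0$-coefficients $-8\sigma_1(n,\chi)\bigl(2+\chi(m)/2^{\nu+2}\bigr)$ from the Bruinier--Kuss formula \cite{BK}), and the coefficient formula of section 7 of \cite{W2} for $Q_{2,2,0}$ mixes the components $\mathfrak{e}_{(0,0,0)}$, $\mathfrak{e}_{(0,0,1/2)}$ and $\mathfrak{e}_{(0,0,\pm 1/4)}$ of $E_{3/2}(\tau;Q_2)$. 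Hence the paper must also identify those auxiliary components as theta series counting representations by $4a^2+b^2+c^2$ and $4a^2+2b^2+c^2$ and recognize the resulting sums as the eta product with coefficients $\sigma_1(n,\chi)$; it is this extra $-12\sigma_1(n,\chi)$ that converts the $E_2$-coefficients into the main term $2\sigma_1(n,\chi)\bigl(2+\chi(m)/2^{\nu}\bigr)$. Pairing only the $\mathfrak{e}_0$-component with $\sum_r q^{2r^2}$, as you propose, bypasses those identifications and targets the main term directly; what it costs is that the product is no longer a form for the Weil representation (you should either tensor with the vector-valued unary theta series or work scalar-valuedly on $\Gamma_0(32)$ with the correct nebentypus), so the assertions that the character really is $\chi$, that no quasimodular anomaly enters the weight-two projection, and that the projection is the particular Eisenstein combination (identified by a Sturm bound, since Bruinier--Kuss does not apply verbatim in the scalar setting) must all be verified rather than assumed.

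Two bookkeeping points to repair in the execution. First, $\bigl(1-\sum_{n\ge 1}|\overline{\alpha}_2(n)|q^n\bigr)\theta(\tau)$ is not $1-\sum_{n\ge 1}\bigl(\sum_r|\overline{\alpha}_2(n-2r^2)|\bigr)q^n$: since $\overline{\alpha}_2(0)=1$, the coefficient of $q^n$ is $-\sum_r|\overline{\alpha}_2(n-2r^2)|$ plus $4$ whenever $2n$ is a perfect square. That is where the exceptional $+4$ in the proposition actually originates (in the paper it sits in the mock part, not as a diagonal term of the projection), and with this normalization your target Eisenstein series has coefficients $-2\sigma_1(n,\chi)\bigl(2+\chi(m)/2^{\nu}\bigr)$ and constant term $1$, not $+2\sigma_1(n,\chi)(\cdots)$. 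Second, the projection correction is not a finite sum as in Mertens' split case \cite{M}: the representations $a^2-2b^2=2n$ fall into finitely many orbits of the Pell unit $3+2\sqrt{2}$, each orbit infinite, and the convergent orbit sums must be telescoped down to the minimal-trace representatives, with the correct weights when $2n$ is a square or when an ideal coincides with its conjugate; this evaluation is precisely the content of section 7 of \cite{W2} that the paper itself invokes, so you should cite or reproduce that computation rather than the rational case that produces $\lambda_1$ in Proposition 2.
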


As before, this translates into an identity for Hurwitz class numbers:

\begin{cor} (i) For any $n = 2^{\nu} m$, $m$ odd, $$\sum_{r \in \mathbb{Z}} H(4n - 2r^2) = \frac{2}{3} \sigma_1(n,\chi) \cdot \Big( 2 + \frac{\chi(m)}{2^{\nu + 2}} \Big) + \frac{1}{2} \sum_{N(\mathfrak{a}) = 8n} \Big( |b| - a \Big).$$ (ii) If $n$ is odd, then $$\sum_{r \in \mathbb{Z}} H(2n - 2r^2) = \frac{4 + \chi(n)}{6} \sigma_1(n,\chi) + \frac{1}{2} \sum_{N(\mathfrak{a}) = 4n} \Big( |b| - a \Big).$$ (iii) If $n$ is odd, then $$\sum_{r \in \mathbb{Z}} H(n - 2r^2) = \frac{2 + \chi(n)}{6} \sigma_1(n,\chi) + \frac{1}{2} \sum_{N(\mathfrak{a}) = 2n} \Big( |b| - a\Big).$$
\end{cor}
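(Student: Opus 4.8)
The plan is to deduce Corollary 5 from the preceding Proposition 4 (the identity for $\sum_{r \in \mathbb{Z}} |\overline{\alpha}_2(n-2r^2)|$) in the same way Corollary 3 was deduced from Proposition 2, the new input being Theorem 1.1 of \cite{BL}. That theorem expresses $\overline{\alpha}_2(k)$ through Hurwitz class numbers and the number $r_3(k)$ of representations of $k$ as a sum of three squares; concretely it comes down to $\overline{\alpha}_2(k) = \tfrac13 r_3(k) - 8H(k) = 4H(4k) - 16H(k)$ for all $k \ge 0$ (with the convention $H(0) = -\tfrac1{12}$), i.e. $\overline{f_2} = \tfrac13 \theta^3 - 8\sum_{k \ge 0} H(k) q^k$ with $\theta = \sum_{j \in \mathbb{Z}} q^{j^2}$. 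The first step is to combine this with the classical relation $r_3(k) = 12H(4k) - 24H(k)$ and Gauss's congruence description of $r_3$ to determine the sign of $\overline{\alpha}_2(k)$: it is $\ge 0$ exactly when $k = 0$ or $k \equiv 1, 2 \pmod 4$ or $k \equiv 3 \pmod 8$ (indeed $\overline{\alpha}_2(k) = 0$ in the last case), and $< 0$ exactly when $k > 0$ and either $4 \mid k$ or $k \equiv 7 \pmod 8$. This sign pattern is precisely what Proposition 1 records, and it gives $|\overline{\alpha}_2(k)| = \varepsilon(k)\bigl(\tfrac13 r_3(k) - 8H(k)\bigr)$ for an explicit sign $\varepsilon(k) \in \{\pm 1\}$ depending only on $k \bmod 8$.

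Next I would apply Proposition 4 with $n$ replaced by $4n$ for part (i) --- which is why the ideal sum there runs over $N(\mathfrak{a}) = 8n$ --- and with $n$ replaced by $2n$ and by $n$ for parts (ii) and (iii); in the last two cases $n$ is odd, so neither $2n$ nor $n$ is a square (the square-indicator of Proposition 4 drops out) and its $2$-adic factor collapses since $\nu = 0$. Substituting the formula for $|\overline{\alpha}_2|$ and splitting the sum over $r \in \mathbb{Z}$ according to the residue of $4n - 2r^2$ (resp. $2n - 2r^2$, $n - 2r^2$) modulo $8$, one uses that $H(\cdot)$ is supported where the argument is $\equiv 3 \pmod 4$ and that $\varepsilon$ is constant ($= -1$) on the locus $\equiv 7 \pmod 8$ while $r_3$ vanishes there; this lets the class-number contributions be reassembled, up to a fixed rational constant, into the desired sum $\sum_r H(4n - 2r^2)$ (resp. $\sum_r H(2n - 2r^2)$, $\sum_r H(n - 2r^2)$), together with a boundary term supported on the (at most one pair of) $r$ with $4n - 2r^2 = 0$ --- that boundary term being exactly what cancels the square-indicator term of Proposition 4.

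What is left are the $r_3$-contributions summed against $\sum_{r \in \mathbb{Z}} q^{2r^2}$ and their residue-class-restricted variants; reindexing, these are representation numbers by quaternary forms of the shape $x^2 + y^2 + z^2 + 2w^2$ and small rescalings of it. Each such form is alone in its genus (equivalently, its theta series has no cusp-form component), so its $n$-th representation number equals the $n$-th coefficient of the associated Eisenstein series --- which is the explicit series computed in \cite{BK}, or one may apply Siegel--Weil directly --- namely a fixed rational multiple of the twisted divisor sum $\sigma_1(n,\chi)$ whose power-of-$2$ normalization is precisely the factor $2 + \chi(m)/2^{\nu}$ occurring in Proposition 4. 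Hence these terms combine with, rather than obstruct, the right-hand side of Proposition 4, and each of (i), (ii), (iii) reduces to a single linear equation whose only unknown is the class-number sum in question. Solving it and simplifying the divisor sums --- using $\sigma_1(2^a n', \chi) = 2^a \sigma_1(n', \chi)$ for odd $n'$ --- yields the stated identities, with the term $\sum_{N(\mathfrak{a}) = 2n} (|b| - a)$ carried over verbatim from Proposition 4 once the rescaling of $n$ is fixed.

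The only genuinely delicate part is the bookkeeping. One has to keep the residue-class splitting consistent across the sign of $\overline{\alpha}_2(k)$, the vanishing loci of $r_3$ and of $H$, and the various $2$-adic factors, and one has to pin down exactly the rational constant --- including its power of $2$ --- relating the quaternary representation numbers to $\sigma_1(\cdot, \chi)$; an error of a factor $2$ or a dropped square or boundary term here would be undetectable until the final comparison. I would therefore check all three identities numerically against tabulated values of $\overline{\alpha}_2(k)$ and $H(k)$ for small $n$, and cross-check (ii) and (iii) against (i) using the standard formulas expressing $H$ at $4m$ and $2m$ through $H$ at $m$, before committing to the final statements.
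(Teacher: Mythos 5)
Your proposal follows essentially the same route as the paper's proof: substitute the Bringmann--Lovejoy identity (equation (4)) expressing $|\overline{\alpha}_2|$ through $8H$ and $\tfrac13 r_3$ into Proposition 4 applied at $4n$, $2n$, $n$, and evaluate the resulting alternating sums $\sum_{r}(-1)^r r_3(\cdot - 2r^2)$ as coefficients of weight-two theta differences identified with Eisenstein series, hence explicit multiples of $\sigma_1(\cdot,\chi)$ --- the paper identifies them with the component $E_2(\tau;Q)_0$ already computed for Proposition 4, while you invoke single-class genera/Siegel--Weil, which is the same computation in a different guise. One small correction to your bookkeeping: in case (ii) the square-indicator of Proposition 4 becomes the condition $4n = \square$, i.e.\ $n$ an odd square, so it does \emph{not} automatically vanish for odd $n$; it is instead cancelled by the boundary terms coming from $|\overline{\alpha}_2(0)|$ versus $8H(0)-\tfrac13 r_3(0)$, exactly by the mechanism you describe for case (i).
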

If there are no ideals of norm $2n$ then the error term $\frac{1}{2} \sum_{N(\mathfrak{a}) = 2n} (|b| - a)$ vanishes. For example, for primes $p$ that remain inert in $\mathbb{Z}[\sqrt{2}]$ (i.e. $p \equiv 3$ or $p \equiv 5$ modulo $8$), we get the formulas 
$$\sum_{r \in \mathbb{Z}} H(4p - 2r^2) = \frac{7}{6} (p-1);\;\; \sum_{r \in \mathbb{Z}} H(2p - 2r^2) = \frac{p-1}{2}; \;\; \sum_{r \in \mathbb{Z}} H(p - 2r^2) = \frac{p-1}{6}.$$


The identities for Hurwitz class number sums of the forms $\sum_{r \, \mathrm{odd}} H(n - r^2)$ and $\sum_{r \in \mathbb{Z}} H(n - 2r^2)$ when $n$ is a multiple of $4$ can be derived from results of Bringmann-Kane \cite{BK2} and Hirzebruch-Zagier \cite{HZ}, respectively, and possibly their techniques could be adapted to general $n$. On the other hand the vector-valued approach in this note seems to make this somewhat easier. (It could also be mentioned that \cite{W3} gives a similar vector-valued interpretation of the two papers mentioned above, although the computation of class number sums there also seems to apply only to $n \equiv 0 \, (4)$.) \\

The rest of this note is organized as follows. We recall some facts about vector-valued modular forms and (mock) Eisenstein series in section 2. In section 3 we compute the mock Eisenstein series attached to a particular quadratic form and show that Bringmann and Lovejoy's form $\overline{f_2}$ occurs up to coefficient signs as a component of that series. In sections 4 and 5 we transform that mock Eisenstein series into a vector-valued mixed mock modular form essentially by multiplying by a theta function and prove Propositions 2 and 4 by holomorphic projection. This is similar to the proof technique of \cite{ORR} which finds relations among the coefficients of some of Ramanujan's mock theta functions. The procedure is more or less the same for all mock Eisenstein series attached to quadratic forms and we can use the formula of section 7 of \cite{W2} to make the computation rather short.

\section{Notation and Background}

The purpose of this section is to explain what is meant by a mock Eisenstein series attached to a quadratic form and to fix some notation for the rest of this note. For convenience we will use the notation $\mathbf{e}(x) =e^{2\pi i x}$. \\

Suppose that $Q$ is an integral quadratic form in $e$ variables with Gram matrix $\mathbf{S}$; that is, $\mathbf{S}$ is symmetric and integral with even diagonal and $Q(x) = \frac{1}{2}x^T \mathbf{S}x$. Let $A = A(Q) = \mathbf{S}^{-1} \mathbb{Z}^e / \mathbb{Z}^e$ denote the discriminant group of $\mathbf{S}$, and let $(b^+,b^-)$ denote the numbers of positive and negative eigenvalues of $\mathbf{S}$ (so the signature of $Q$ is $\mathrm{sig}(Q) = b^+ - b^-$). There is a representation $\rho^*_Q$ (denoted $\rho^*$ if $Q$ is clear from the context) of the metaplectic group $\tilde \Gamma = Mp_2(\mathbb{Z})$ on the group ring $\mathbb{C}[A]$ given by $$\rho^*(S) \mathfrak{e}_{\gamma} = \frac{\mathbf{e}(\mathrm{sig}(Q)/8)}{\sqrt{|A|}} \sum_{\beta \in A} \mathbf{e}(\gamma^T \mathbf{S} \beta) \mathfrak{e}_{\beta}, \; \; \rho^*(T) \mathfrak{e}_{\gamma} = \mathbf{e}(-Q(\gamma)) \mathfrak{e}_{\gamma}, \; \; \gamma \in A,$$ where $\mathfrak{e}_{\gamma}, \; \gamma \in A$ form the natural basis of $\mathbb{C}[A]$ and where $S = (\begin{psmallmatrix} 0 & -1 \\ 1 & 0 \end{psmallmatrix}, \sqrt{\tau})$, $T= (\begin{psmallmatrix} 1 & 1 \\ 0 & 1 \end{psmallmatrix}, 1)$ are the standard generators of $\tilde \Gamma$. This representation arises naturally through the action of $\tilde \Gamma$ on the (complex conjugate of the) theta function of $Q$ in the following sense. For any $b^+$-dimensional subspace $z \subseteq \mathbb{R}^e$ on which $Q$ is positive-definite, write $\lambda_z$ and $\lambda_{z^{\perp}}$ for the projections of $\lambda \in \mathbb{R}^e$ onto $z$ and its orthogonal complement (with respect to $Q$) $z^{\perp}$. Then the theta function $$\Theta_z(\tau) = \sum_{\lambda \in \mathbf{S}^{-1} \mathbb{Z}^e} \mathbf{e}\Big(Q(\lambda_z) \tau + Q(\lambda_{z^{\perp}}) \overline{\tau}\Big) \mathfrak{e}_{\lambda}$$ satisfies $$\overline{\Theta_z(M \cdot \tau)} = (c \tau + d)^{b^-/2} (c \overline{\tau} + d)^{b^+/2} \rho^*(M) \overline{\Theta_z(\tau)}$$ for all $M = (\begin{psmallmatrix} a & b \\ c & d \end{psmallmatrix}, \sqrt{\tau}) \in \tilde \Gamma$. See \cite{B}, chapter 4. \\

For any weight $k \in 2 \mathbb{Z} - \frac{\mathrm{sig}(Q)}{2}$ one can define the Eisenstein series of weight $k$ attached to $Q$: $$E_k^*(\tau,s;Q) = \sum_{M \in \tilde \Gamma_{\infty} \backslash \tilde \Gamma} (y^s \mathfrak{e}_0) \Big|_{k,\rho^*} M, \; \; \mathrm{Re}[s] \; \text{large enough},$$ where $|_{k,\rho^*}$ is a Petersson slash operator twisted by the representation $\rho^*$, i.e. $$f|_{k,\rho^*} M = (c \tau + d)^{-k} \rho^*(M)^{-1} f(M \cdot \tau), \; \; M \in \tilde \Gamma,$$ and $\tilde \Gamma_{\infty}$ is the subgroup of those $M \in \tilde \Gamma$ for which $(y^s \mathfrak{e}_0) |_{k,\rho^*} M = y^s \mathfrak{e}_0$ (i.e. the subgroup generated by $T$ and by the center of $\tilde \Gamma$.) This series can be meromorphically extended to all $s \in \mathbb{C}$, and the extension is entire unless the weight is $k \in \{0,\pm 1/2, \pm 5/2, \pm 9/2,...\}$ in which case $E_k^*(\tau,s;Q)$ may have at worst a simple pole at the point $s = 1-k$ and nowhere else. In particular the zero-value $E_k^*(\tau;Q) = E_k^*(\tau,0,Q)$ is always well-defined. \\

Since $E_k^*(\tau,s;Q)$ satisfies $$\Delta_k E_k^*(\tau,s;Q) = s (s - k - 1) E_k^*(\tau,s;Q)$$ under the weight $k$ Laplacian $$\Delta_k = 4y^2 \frac{\partial^2}{\partial \tau \partial \overline{\tau}} - 2iky \frac{\partial}{\partial \overline{\tau}},$$ it follows that $E_k^*(\tau;Q)$ is annihilated by $\Delta_k$, i.e. it is a harmonic Maass form. Expanding $E_k^*(\tau;Q)$ as a Fourier series and using the fact that each term in the series is annihilated by $\Delta_k$ we find an expression of the form $$E_k^*(\tau;Q) = \sum_{n \ge 0} a_n q^n + b_0 \phi(y) - \sum_{n > 0}^{\infty} b_n (4\pi n)^{1-k} \Gamma(1-k,4\pi n y) q^{-n},$$ where as always $q = e^{2\pi i \tau}$; and $\Gamma(a,s)$ is the incomplete Gamma function, $\phi(y)$ is an antiderivative of $y^{-k}$, and $a_n,b_n \in \mathbb{C}[A]$ are vectors (Fourier coefficients). One can show that the \textbf{shadow} $$2iy^k \overline{\frac{\partial}{\partial \overline{\tau}} E_k^*(\tau;Q)} = \sum_{n=0}^{\infty} b_n q^n$$ is a modular form of weight $2-k$ for the representation $\rho_{-Q}^*$ attached to $-Q$; when the shadow is nonzero the ``holomorphic part" $$E_k(\tau;Q) = \sum_{n=0}^{\infty} a_n q^n$$ is called the \textbf{mock Eisenstein series} of weight $k$ associated to $Q$. See sections 4,5 of \cite{O} for a detailed overview (extending the proofs there to vector-valued mock modular forms is straightforward). \\

We will need to consider weights $k = 3/2$ and $k=2$. In weight two the shadow must be constant and therefore $$E_2^*(\tau;Q) = \sum_{n \ge 0} a_n q^n + \frac{v}{y}$$ for some vector $v \in \mathbb{C}[A]$ which is invariant under $\rho$. In particular the holomorphic part $$E_2(\tau;Q) = \sum_{n \ge 0} a_n q^n$$ is a \textbf{quasimodular form} as defined in \cite{KZ}, i.e. there exists $d \in \mathbb{N}_0$ such that $E_2(\tau;Q)$ is the holomorphic part of a real-analytic modular form whose Fourier coefficients are polynomials in $y^{-1}$ of degree $\le d$.  (More familiar examples of vector-valued quasimodular forms are the theta functions $\sum_{x \in \mathbf{S}^{-1} \mathbb{Z}^e} p(x) q^{Q(x)} \mathfrak{e}_x$ attached to positive-definite quadratic forms $Q$ and homogeneous polynomials $p(x)$, which are true modular forms only when $p$ is harmonic with respect to $Q$.) The shadow $v \in \mathbb{C}[A]$ of $E_2$ can only be nonzero if $\#A$ is a perfect square (for example by section 6 of \cite{W1}). \\

In weight $3/2$ the mock Eisenstein series for the quadratic form $Q(x) = x^2$ is the generating function of Hurwitz class numbers, $$E_{3/2}(\tau) = 1 - 12 \sum_{n=1}^{\infty} H(n) q^{n/4} \mathfrak{e}_{n/2},$$ which has shadow $-24 \vartheta(\tau)$ where $\vartheta(\tau) = \sum_{n \in \mathbb{Z}} q^{n^2 / 4} \mathfrak{e}_{n/2}$ is the theta function of $Q$. The shadow of any weight 3/2 mock Eisenstein series will be a sum of theta functions by the Serre-Stark basis theorem; this essentially means that all weight $3/2$ mock Eisenstein series differ from true modular forms by generating functions of some linear combinations of Hurwitz class numbers. \\

It is often more convenient to split up the Fourier expansions of vector-valued modular forms into components; so we will usually write out a modular form $f$ as $$f(\tau) = \sum_{\gamma \in A} \sum_{n \in \mathbb{Z} - Q(\gamma)} c(n,\gamma) q^n \mathfrak{e}_{\gamma}$$ and label the individual components by $f_{\gamma}(\tau) = \sum_{n \in \mathbb{Z} - Q(\gamma)} c(n,\gamma) q^n.$ The same rule applies to harmonic Maass forms and mock modular forms.

\section{Proof of proposition 1}

We first observe that $\overline{\alpha}_2(n)$ is positive when $n \equiv 1,2 \, (\bmod \, 4)$ and $\overline{\alpha}_2(n)$ is negative or $0$ when $n \equiv 0,3 \, (\bmod \, 4)$ (with $n=0$ as an exception). This is a consequence of equation (1.5) of \cite{BL} which gives an exact expression for $\overline{\alpha}_2(n)$ in terms of Hurwitz class numbers and representation counts by sums of three squares. It follows that the series in Proposition 1 is \begin{align*} f(\tau) &= 1 - \sum_{n=1}^{\infty} |\overline{\alpha}_2(n)| q^n \\ &= 1 - 2q - 4q^2 - 2q^4 - 8q^5 - 8q^6 - ... \\  &= \frac{1+i}{2} \overline{f_2}(\tau + 1/4) + \frac{1-i}{2} \overline{f_2}(\tau - 1/4),\end{align*} where $\overline{f_2}(\tau) = \sum_{n=0}^{\infty} \overline{\alpha}_2(n) q^n$ and where $q = e^{2\pi i \tau}$. \\

Theorem 2.1 of \cite{BL} shows that the real-analytic correction of $\overline{f_2}(\tau)$ is the harmonic weak Maass form \begin{equation}\begin{split} \overline{f_2}(\tau) - \mathcal{N}_2(\tau) &:= \overline{f_2}(\tau) + \frac{i}{\pi \sqrt{2}} \int_{-\overline{\tau}}^{i \infty} \frac{\Theta(t)}{(-i (\tau + t))^{3/2}} \, \mathrm{d}t \\ &= \overline{f_2}(\tau) - \frac{\sqrt{i}}{\pi \sqrt{2}} \int_{-\overline{\tau}}^{\infty} \frac{\Theta(t)}{(\tau + t)^{3/2}} \, \mathrm{d}t \end{split}\end{equation} of level 16, where $\Theta$ is the classical theta series $\Theta(\tau) = 1 + 2q + 2q^4 + 2q^9 + ...$ and $\mathcal{N}_2$ is a sort of period integral of $\Theta$. It follows that the real-analytic correction of $f(\tau)$ is also \begin{align*} &\quad f(\tau) - \frac{1+i}{2} \mathcal{N}_2(\tau + 1/4) - \frac{1-i}{2} \mathcal{N}_2(\tau - 1/4) \\ &= f(\tau) - \frac{\sqrt{i}}{\pi \sqrt{2}} \int_{-\overline{\tau}}^{i \infty} \frac{1}{( \tau + t)^{3/2}} \Big[ \frac{1+i}{2} \Theta(t - 1/4) + \frac{1-i}{2} \Theta(t+1/4) \Big] \, \mathrm{d}t \\ &= f(\tau) - \frac{\sqrt{i}}{\pi \sqrt{2}} \int_{-\overline{\tau}}^{i \infty} \frac{\Theta(t)}{( \tau + t)^{3/2}} \, \mathrm{d}t, \end{align*} using the fact that $\frac{1+i}{2} \Theta(t-1/4) + \frac{1-i}{2} \Theta(t+1/4) =\Theta(t)$ since the only nonzero Fourier coefficients of $\Theta$ occur with exponents $n \equiv 0,1 \, (\bmod \, 4).$

\begin{rem} This implies that $$\overline{f_2}(\tau) - f(\tau) = 2\sum_{n \equiv 1,2 \, (4)} \overline{\alpha}_2(n) q^n = 4q + 8q^2 + 16q^5 + 16q^6 + ...$$ is a weight-$3/2$ modular form of level $16$. This is true; for example, we can write it as a difference of theta series, $$4q + 8q^2 + 16q^5 + 16q^6 + ... = \vartheta_{B_1}(\tau) - \vartheta_{B_2}(\tau)$$ for the matrices $B_1 = \mathrm{diag}(2,2,2)$ and $B_2 = \mathrm{diag}(4,4,2),$ which can be verified by computing finitely many coefficients. Here the theta functions are $$\vartheta_B(\tau) = \sum_{v \in \mathbb{Z}^3} q^{v^T B v / 2}, \; \; B \in \{B_1,B_2\}.$$
\end{rem}

We need to compare the weak Maass form above with the real-analytic correction of the mock Eisenstein series $E_{3/2}(\tau;Q_2)$ for $Q_2(x,y,z) = 2x^2 - y^2 + 2z^2$. Following remark 16 of \cite{W1}, this is $$E_{3/2}^*(\tau,0;Q_2) = E_{3/2}(\tau;Q_2) + \frac{\sqrt{2i}}{16\pi} \int_{-\overline{\tau}}^{i \infty} \frac{\vartheta(t)}{(t + \tau)^{3/2}} \, \mathrm{d}t,$$ where $\vartheta$ is the shadow $$\vartheta(\tau) = \sum_{\gamma \in A(Q_2)} \sum_{\substack{n \in \mathbb{Z} - Q_2(\gamma) \\ n \le 0}} a(n,\gamma) q^{-n} \mathfrak{e}_{\gamma}.$$ The coefficients $a(n,\gamma)$ are given by $$a(n,\gamma) = -\frac{48 \sqrt{2}}{\sqrt{|A(Q_2)|}} \prod_{\mathrm{bad}\, p} \lim_{s \rightarrow 0} \frac{1 - p^{-2s}}{1 + p^{-1}} L_p(n,\gamma,2 + 2s) \times \begin{cases} 1: & n < 0; \\ 1/2: & n = 0; \end{cases}$$ where the ``bad primes" are $p = 2$ and all primes dividing the discriminant $|A(Q_2)|$ or at which $n$ has nonzero valuation, and $L_p$ is the series $$L_p(n,\gamma,s) = \sum_{\nu=0}^{\infty} p^{-\nu s} \# \Big\{ v \in (\mathbb{Z} / p^{\nu} \mathbb{Z})^3 : \; Q_2(v - \gamma) + n \equiv 0 \; (\bmod \, p^{\nu}) \Big\}.$$

We only need to compute a small number of these coefficients to identify $\vartheta$ among modular forms of weight $1/2$ for the Weil representation (not its dual) attached to the quadratic form $\tilde Q_2$. A general result of Skoruppa \cite{Sk} implies that the space of such forms is spanned by the theta series \begin{align*} \vartheta_1(\tau) &= \Big( 1 + 2q + 2q^4 + 2q^9 + ... \Big) (\mathfrak{e}_{(0,0,0)} + \mathfrak{e}_{(1/4,1/2,1/4)} + \mathfrak{e}_{(1/2,0,1/2)} + \mathfrak{e}_{(3/4,1/2,3/4)}) \\ & \quad + \Big( 2q^{1/4} + 2q^{9/4} + 2q^{25/4} + ... \Big) (\mathfrak{e}_{(1/4,0,3/4)} + \mathfrak{e}_{(1/2,1/2,0)} + \mathfrak{e}_{(3/4,0,1/4)} + \mathfrak{e}_{(0,1/2,1/2)}) \end{align*} and \begin{align*} \vartheta_2(\tau) &= \Big( 1 + 2q + 2q^4 + 2q^9 + ... \Big) (\mathfrak{e}_{(0,0,0)} + \mathfrak{e}_{(1/4,1/2,3/4)} + \mathfrak{e}_{(1/2,0,1/2)} + \mathfrak{e}_{(3/4,1/2,1/4)}) \\ &\quad + \Big( 2q^{1/4} + 2q^{9/4} + 2q^{25/4} + ... \Big) (\mathfrak{e}_{(1/4,0,1/4)} + \mathfrak{e}_{(1/2,1/2,0)} + \mathfrak{e}_{(3/4,0,3/4)} + \mathfrak{e}_{(0,1/2,1/2)}). \end{align*}

Therefore, it is enough to compute the constant term of the shadow $\vartheta(\tau)$. The only bad prime at $n=0$ is $p = 2$, and the corresponding local $L$-functions are \begin{equation} L_2(0,\gamma,s) = \begin{cases} (2^{4s} - 2^{2s+3} + 2^7)(1 - 2^{2-s})^{-1}(2^{2s} - 8)^{-1}: & \gamma = (0,0,0); \\ (1 + 2^{4 - 2s})(1 - 2^{2-s})^{-1}: & \gamma = (1/2,1/2,0); \\ (1 - 2^{2-s})^{-1}: & \text{otherwise}; \end{cases} \end{equation} such that $$\lim_{s \rightarrow 0} (1 - 2^{-2s}) L_2(0,\gamma,2+2s) = \begin{cases} 2: & \gamma = (0,0,0) \; \mathrm{or} \; \gamma = (1/2,1/2,0); \\ 1: & \text{otherwise}. \end{cases}$$ This implies that the shadow is \begin{align*} \vartheta(\tau) &= -8 (\mathfrak{e}_{(0,0,0)} +\mathfrak{e}_{(1/2,1/2,0)}) - 4 (\mathfrak{e}_{(1/4,1/4,1/2)} + \mathfrak{e}_{(3/4,3/4,1/2)} + \mathfrak{e}_{(1/4,3/4,1/2)} + \mathfrak{e}_{(3/4,1/4,1/2)}) + O(q^{1/4}) \\ &= -4 \vartheta_1(\tau) - 4 \vartheta_2(\tau) \end{align*} with $\mathfrak{e}_0$-component $-8\Theta(\tau)$, so the $\mathfrak{e}_0$-component of $E_{3/2}^*(\tau,0;Q_2)$ is $$E_{3/2}^*(\tau,0;Q_2)_0 = E_{3/2}(\tau;Q_2)_0 - \frac{\sqrt{i}}{\pi \sqrt{2}} \int_{-\overline{\tau}}^{i \infty} \frac{\Theta(t)}{(t + \tau)^{3/2}} \, \mathrm{d}t,$$ with the same real-analytic part as $f(\tau).$ It follows that $E_{3/2}^*(\tau,0;Q_2)_0 - f(\tau)$ is a modular form of level $16$. We can verify that it is identically $0$ by comparing Fourier coefficients up to the Sturm bound \cite{St} (i.e. if the $q$-expansion of a modular form $h$ of weight $k$ for a subgroup $\Gamma$ vanishes to order $\ell > \frac{k}{12}[PSL_2(\mathbb{Z}) : \Gamma]$ and $\eta$ is Dedekind's eta function, then $\eta^{-24\ell} \prod_{M \in PSL_2(\mathbb{Z})/\Gamma} h(M \cdot \tau)$ is a holomorphic modular form of negative weight and therefore identically zero.) \\

The case of the quadratic form $\tilde Q_2(x,y,z) = 2x^2 - 2y^2 + z^2$ is similar; here, the space of weight $1/2$ modular forms is spanned by the theta functions \begin{align*} \tilde \vartheta_1(\tau) &= \Big( 1 + 2q + 2q^4 + 2q^9 + ... \Big) (\mathfrak{e}_{(0,0,0)} + \mathfrak{e}_{(1/4,1/4,0)} + \mathfrak{e}_{(1/2,1/2,0)} + \mathfrak{e}_{(3/4,3/4,0)}) \\ &\quad + \Big( 2q^{1/4} + 2q^{9/4} + 2q^{25/4} + ... \Big) (\mathfrak{e}_{(0,0,1/2)} + \mathfrak{e}_{(1/4,1/4,1/2)} + \mathfrak{e}_{(1/2,1/2,1/2)} + \mathfrak{e}_{(3/4,3/4,1/2)}) \end{align*} and \begin{align*} \tilde \vartheta_2(\tau) &= \Big( 1 + 2q + 2q^4 + 2q^9 + ... \Big) (\mathfrak{e}_{(0,0,0)} + \mathfrak{e}_{(1/4,3/4,0)} + \mathfrak{e}_{(1/2,1/2,0)} + \mathfrak{e}_{(3/4,1/4,0)}) \\ &\quad + \Big( 2q^{1/4} + q^{9/4} + 2q^{25/4} + ... \Big) (\mathfrak{e}_{(0,0,1/2)} + \mathfrak{e}_{(1/4,3/4,1/2)} + \mathfrak{e}_{(1/2,1/2,1/2)} + \mathfrak{e}_{(3/4,1/4,1/2)}), \end{align*} again by \cite{Sk}. The local $L$-functions for $n = 0$ are exactly the same as those of equation (2) (although ``otherwise'' now stands for a different set of elements $\gamma$). It follows that $\mathfrak{e}_0$ component of $E_{3/2}^*(\tau,0;\tilde Q_2)$ is the same as the $\mathfrak{e}_0$-component of $E_{3/2}^*(\tau,0;Q_2)$ and therefore also equals $f(\tau).$ \\

\section{Proof of proposition 2}

The real-analytic Eisenstein series $E_{3/2}^*(\tau,0;\tilde Q_2)$ (where $\tilde Q_2(x,y,z) = 2x^2 - 2y^2 + z^2$) corresponds via the theta decomposition to a real-analytic Jacobi form $E_{2,1,0}^*(\tau,z,0;Q)$ for the dual Weil representation $\rho^*$ attached to the quadratic form $Q(x,y) = 2x^2 - 2y^2$, and we obtain a quasimodular form $Q_{2,1,0}$ (the ``Poincar\'e square series of index $1$'') by projecting $E_2^*(\tau,0,0;Q)$ orthogonally to the space of cusp forms $S_2(\rho^*)$ for $\rho^*$ and then adding to it the quasimodular Eisenstein series $E_2(\tau;Q)$. Since $S_2(\rho^*) = \{0\}$, it follows that $Q_{2,1,0}(\tau) = E_2(\tau;Q).$ \\

The $\mathfrak{e}_0$-component of the series $E_2(\tau;Q)$ is $$E_2(\tau;Q)_0 = 1 -4q - 24q^2 - 16q^3 - 40q^4 - 24q^5 - ...$$ Using the structure of quasimodular forms of higher level (proposition 1(b) of \cite{KZ}) we identify it as $$1 - 4q - 24q^2 - 16q^3 - 40q^4 - ... = \frac{2}{3} E_2(2\tau) + \frac{2}{3} E_2(4\tau) - \frac{1}{3} E_2(2\tau + 1/2) - 4 \sum_{n=0}^{\infty} \sigma(2n+1)q^{2n+1},$$ in which the coefficient of $q^n$ is $$\alpha(n) = \begin{cases} -4 \sigma_1(n) : & n \equiv 1\, (2); \\ -24 \sigma_1(n/2): & n \equiv 2\, (4); \\ -8\sigma_1(n/2) - 16 \sigma_1(n/4): & n \equiv 0 \, (4). \end{cases}$$

In general, if $Q$ is an $e$-dimensional quadratic form of signature $\mathrm{sig}(Q) \equiv 0 \, (4)$, let $\tilde Q(x,y) = Q(x) + y^2$. Suppose the weight-$3/2$ mock Eisenstein series for $\tilde Q$ has coefficients denoted $$E_{3/2}(\tau;\tilde Q) = \sum_{\tilde \gamma \in A(\tilde Q)} \sum_{n \in \mathbb{Z} - \tilde Q(\tilde \gamma)} c(n,\tilde \gamma) q^n \mathfrak{e}_{\tilde \gamma}$$ with shadow $$\vartheta(\tau) = \sum_{\gamma \in A(\tilde Q)} \sum_{\substack{n \in \mathbb{Z} - \tilde Q(\tilde \gamma) \\ n \le 0}} a(n,\tilde \gamma) q^{-n} \mathfrak{e}_{\tilde \gamma}.$$ The results of \cite{W2} imply that the coefficient formula for $$Q_{2,1,0}(\tau) = \sum_{\gamma \in A(Q)} \sum_{n \in \mathbb{Z} - Q(\gamma)} b(n,\gamma) q^n \mathfrak{e}_{\gamma}$$ is $$b(n,\gamma) = \sum_{r \in \mathbb{Z}} c(n - r^2/4, (\gamma, r/2)) + \frac{1}{8} \sum_{r \in \mathbb{Z}} a(n - r^2 / 4,(\gamma,r/2)) \Big( |r| - \sqrt{r^2 - 4n} \Big).$$

In particular, the formula for $b(n,0)$ involves knowing both the components of $\mathfrak{e}_{(0,0,0)}$ and $\mathfrak{e}_{(0,0,1/2)}$ in the mock Eisenstein series $E_{3/2}(\tau;\tilde Q_2).$ Computing the first few coefficients gives $$E_{3/2}(\tau;\tilde Q_2)_{(0,0,1/2)} = -4q^{7/4} - 8q^{15/4} - 12q^{23/4} - 12q^{31/4} - ...$$

\begin{lem} Let $H(n)$ denote the Hurwitz class number of $n \in \mathbb{N}_0$; then $$E_{3/2}(\tau;\tilde Q_2)_{(0,0,1/2)} = -4 \sum_{n \equiv 7\, (8)} H(n) q^{n/4}.$$
\end{lem}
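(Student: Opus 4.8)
The plan is to follow the template of Section 3: determine the shadow of the component $E_{3/2}(\tau;\tilde Q_2)_{(0,0,1/2)}$, exhibit a second weight-$3/2$ mock modular form built out of Hurwitz class numbers with exactly the same shadow, and conclude that the difference is a holomorphic modular form which then vanishes by a Sturm-bound argument.

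First I would read off the left-hand shadow. By the computation of Section 3 the shadow of $E_{3/2}^*(\tau,0;\tilde Q_2)$ is $-4\tilde\vartheta_1 - 4\tilde\vartheta_2$, and from the displayed expansions of $\tilde\vartheta_1$ and $\tilde\vartheta_2$ their common $\mathfrak{e}_{(0,0,1/2)}$-component is $\sum_{r\,\mathrm{odd}}q^{r^2/4}$; hence $E_{3/2}(\tau;\tilde Q_2)_{(0,0,1/2)}$ is a scalar mock modular form of weight $3/2$, with a $q^{1/4}$-expansion and on a congruence subgroup, whose shadow is $-8\sum_{r\,\mathrm{odd}}q^{r^2/4}$.

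Next I would compute the shadow of $-4\sum_{n\equiv 7\,(8)}H(n)q^{n/4}$ from the vector-valued Hurwitz series of Section 2: $E_{3/2}(\tau) = 1 - 12\sum_{n\ge 1}H(n)q^{n/4}\mathfrak{e}_{n/2}$ has shadow $-24\sum_n q^{n^2/4}\mathfrak{e}_{n/2}$, so, passing to the $\mathfrak{e}_{1/2}$-component and dividing by the real scalar $-12$ (legitimate since the shadow operator is conjugate-linear), the series $\sum_{n\,\mathrm{odd}}H(n)q^{n/4}$ has shadow $2\sum_{n\,\mathrm{odd}}q^{n^2/4}$. Extracting the sub-sum over $n\equiv 7\,(8)$ by averaging over $q^{1/4}\mapsto\zeta q^{1/4}$ for the eighth roots of unity $\zeta$, the conjugate-linearity of the shadow operator means that on the shadow side the corresponding extraction picks out the exponents $n^2$ with $n^2\equiv -7\equiv 1\,(8)$, which holds for every odd $n$; therefore the shadow of $\sum_{n\equiv 7\,(8)}H(n)q^{n/4}$ is still all of $2\sum_{n\,\mathrm{odd}}q^{n^2/4}$, and that of $-4\sum_{n\equiv 7\,(8)}H(n)q^{n/4}$ is $-8\sum_{n\,\mathrm{odd}}q^{n^2/4}$, matching the left-hand side exactly. (By the same token the complementary sum over $n\equiv 3\,(8)$ has vanishing shadow, hence is a genuine holomorphic modular form — a point we will not need.) Because everything here is carried out in the shadow normalization already fixed in Sections 2 and 3, no separate chasing of constants is required.

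It follows that $$D(\tau) := E_{3/2}(\tau;\tilde Q_2)_{(0,0,1/2)} + 4\sum_{n\equiv 7\,(8)}H(n)q^{n/4}$$ is a holomorphic modular form of weight $3/2$ on a congruence subgroup, and it is a cusp form vanishing to high order, since $H(7)=1$, $H(15)=2$, $H(23)=H(31)=3$ already match the four terms listed above for $E_{3/2}(\tau;\tilde Q_2)_{(0,0,1/2)}$. I would finish exactly as at the end of Section 3: continue the $q^{1/4}$-expansions — those of $E_{3/2}(\tau;\tilde Q_2)_{(0,0,1/2)}$ from the local density formula, those of the Hurwitz sum from class number tables — up to the Sturm bound of the ambient space, and conclude via the $\eta$-power argument that $D\equiv 0$. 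The main obstacle is entirely bookkeeping: pinning down the level, character and choice of $q^{1/4}$-expansion precisely enough that ``component of a vector-valued modular form'' becomes a classical modular form with an effective Sturm bound, and carrying the residues modulo $8$ correctly through the conjugate-linear shadow operator; once that is in place the equality of shadows, and hence the lemma, follows.
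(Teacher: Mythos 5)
Your proposal is correct in substance, but it proves the lemma by a genuinely different route than the paper. You match shadows: you read off from Section 3 that the shadow of $E_{3/2}(\tau;\tilde Q_2)$ is $-4\tilde\vartheta_1-4\tilde\vartheta_2$, so its $(0,0,1/2)$-component has shadow $-8\sum_{r\,\mathrm{odd}}q^{r^2/4}$, and you correctly show (handling the conjugate-linearity of $\xi_{3/2}$ through the root-of-unity extraction, using $r^2\equiv 1\ (8)$ for odd $r$) that $-4\sum_{n\equiv 7\,(8)}H(n)q^{n/4}$ has the same shadow while the $n\equiv 3\,(8)$ piece is genuinely modular; the difference is then a holomorphic form killed by a Sturm-bound check. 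The paper instead proves the identity coefficient-by-coefficient: it compares the Bruinier--Kuss/Eisenstein coefficient formula (equation (3)) for $\tilde Q_2$ with the same formula for $Q(x)=x^2$ (Zagier's series, coefficient $-12H(4n)$), notes that the only discrepancies are the factor $\sqrt{|A|}$ (a factor $4$) and the $2$-adic local density, and evaluates the latter to get the ratio $\tfrac14\cdot\tfrac{2}{3/2}$ for $n\equiv 7\,(8)$ and $0$ for $n\equiv 3\,(8)$, yielding $-4H(n)$ exactly for every coefficient with no level/Sturm bookkeeping at all. Your route buys conceptual economy (everything is forced by the shadow already computed in Section 3, in the spirit of the paper's Proposition 1 and of the remark in the proof of Lemma 9 that such identities can be settled by finitely many coefficients), but be aware that it does not really avoid the local-density formula: the finitely many coefficients of $E_{3/2}(\tau;\tilde Q_2)_{(0,0,1/2)}$ needed up to the Sturm bound must themselves be computed from it, and you still owe the bookkeeping you flag (precise congruence subgroup and multiplier for the scalar component and for the progression-extracted Hurwitz series, holomorphy at all cusps of the difference) before the valence/$\eta$-power argument applies; the paper's direct computation is shorter on exactly these points.
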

\begin{proof} In general, the formula of \cite{BK},\cite{W1} for the weight $3/2$ mock Eisenstein series for a quadratic form $Q$ of signature $(b^+,b^-)$ and dimension $e = b^+ - b^-$ (under the assumption $3 + b^+ - b^- \equiv 0 \, (4)$) implies that the coefficient $c(n,\gamma)$ of $E_{3/2}(\tau;Q)$ is \begin{equation} \begin{split} &\frac{24 (-1)^{(3 + b^+ - b^-)/4} L(1,\chi_{\mathcal{D}})}{\sqrt{\frac{1}{2n}|A(Q)| }\pi} \cdot \Big[ \sum_{d | f} \mu(d) \chi_{\mathcal{D}}(d) d^{-1} \sigma_{-1}(f/d) \Big] \times \\ &\quad\quad\quad\quad \quad \quad \times \prod_{p | ( 2|A(Q)|)} \Big[ \lim_{s \rightarrow 0} \frac{1 - p^{(e-3)/2 - 2s}}{1 - p^{-2}} L_p(n,\gamma,1/2 + e/2 + 2s) \Big], \end{split} \end{equation} where $\mathcal{D}$ is a discriminant defined in Theorem 4.5 of \cite{BK}, and $\chi_{\mathcal{D}}(d) = \left( \frac{\mathcal{D}}{d} \right)$ is the Kronecker symbol, and $\sigma_{-1}(n) = \sum_{d | n} d^{-1}$, and $f^2$ is the largest square dividing $n$ but coprime to $2 \cdot |A(Q)|.$ Finally, recall that $L_p(n,\gamma,s)$ denotes the series $$L_p(n,\gamma,s) = \sum_{\nu=0}^{\infty} p^{-\nu s} \# \Big\{ v \in (\mathbb{Z}/p^{\nu} \mathbb{Z})^e: \; Q(v - \gamma) + n \equiv 0 \, (\bmod \, p^{\nu}) \Big\}.$$ For a fixed lattice, the series $L_p(n,\gamma,s)$ can always be evaluated in closed form in both $n$ and $s$ (for example, using the $p$-adic generating functions of \cite{CKW}) although the result tends to be messy (particularly for $p = 2$). \\

In our case, most of these terms are the same between the quadratic forms $Q(x) = x^2$ (in which case, $E_{3/2}$ is essentially the Zagier Eisenstein series and $c(n,\gamma)$ is $-12H(4n)$) and $\tilde Q_2(x,y,z) = 2x^2 - 2y^2 + z^2$. The only differences are the order of the discriminant group and the local $L$-functions above the prime $p=2$. \\

For $Q(x) = x^2$, the result is that $$\lim_{s \rightarrow 0} (1 - 2^{-2s}) L_2\Big( \frac{8n+7}{4}, \frac{1}{2}, 1 + 2s \Big) = \lim_{s \rightarrow 0} \frac{(1 - 2^{-2s})(2^{1+2s} + 1)}{2^{1+2s} - 1} = \frac{3}{2},$$ while for $\tilde Q_2(x,y,z) = 2x^2 - 2y^2 + z^2$ the result is that $$\lim_{s \rightarrow 0} (1 - 2^{-2s}) L_2\Big( \frac{8n+7}{4}, \frac{1}{2}, 2+2s \Big) = \lim_{s \rightarrow 0} \frac{(1 - 2^{-2s}) (2^{2+2s} + 4)}{2^{2+2s} - 4} = 2$$ and $$\lim_{s \rightarrow 0} (1 - 2^{-2s}) \underbrace{ L_2\Big( \frac{8n+3}{4}, \frac{1}{2}, 2+2s \Big)}_{=1} = 0.$$ Since the discriminant group of $2x^2 - 2y^2 + z^2$ is $16$ times the size of that of $x^2$, we conclude that the coefficient of $q^{(4n+7)/4}$ in the series $E_{3/2}(\tau;\tilde Q_2)_{(0,0,1/2)}$ is $$\frac{1}{4} \cdot \frac{2}{3/2} \cdot \begin{cases} -12H(4n+7): & n \equiv 0 \, (2); \\ 0: & n \equiv 1\, (2); \end{cases}$$ which implies the claim.
\end{proof}

Finally, the coefficients $a(n-r^2 / 4, (0,r/2))$ of the shadow $\vartheta$ are $$a(n-r^2 / 4, (0,r/2)) = \begin{cases} -8: & n-r^2 / 4 = 0; \\ -16: & r^2 - 4n \; \text{is a square}; \\ 0: & \text{otherwise}. \end{cases}$$ Comparing coefficients in $Q_{2,1,0} = E_2(\tau;Q)_0$ gives the formula:

\begin{lem} If $n \equiv 0 \, \bmod \, 4$, then $$8 \sigma_1(n/2) + 16 \sigma_1(n/4) = \sum_{r \in \mathbb{Z}} |\overline{\alpha}_2(n-r^2)| + 4 \sum_{r \, \mathrm{odd}} H(4n-r^2) + 2\sum_{r^2 - 4n = \square } \Big( |r| - \sqrt{r^2 - 4n} \Big) - \begin{cases} 4 (\sqrt{n} + 1): & n = \square; \\ 0: & \mathrm{otherwise}. \end{cases}$$  If $n \equiv 2\, \bmod \, 4$, then $$24 \sigma_1(n/2) = \sum_{r \in \mathbb{Z}} |\overline{\alpha}_2(n-r^2)| + 4 \sum_{r \, \mathrm{odd}} H(4n-r^2) + 2 \sum_{r^2 - 4n = \square} \Big( |r| - \sqrt{r^2 - 4n} \Big).$$ If $n \equiv 1\,  \bmod \, 2$, then $$4 \sigma_1(n) = \sum_{r \in \mathbb{Z}} |\overline{\alpha}_2(n - r^2)| + 2 \sum_{r \in \mathbb{Z}} \Big( |r| - \sqrt{r^2 - 4n} \Big) - \begin{cases} 4 (\sqrt{n} + 1) : & n = \square; \\ 0: & \mathrm{otherwise}; \end{cases}$$
\end{lem}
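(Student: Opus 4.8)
The plan is to specialize the coefficient formula for $Q_{2,1,0}(\tau) = \sum_{\gamma,n} b(n,\gamma)q^n\mathfrak{e}_\gamma$ to $\gamma = (0,0)$ and evaluate the three resulting terms one at a time. Since $S_2(\rho^*) = \{0\}$ we have $Q_{2,1,0} = E_2(\tau;Q)$, so the left side $b(n,(0,0))$ is exactly the coefficient of $q^n$ in $E_2(\tau;Q)_0$, which was identified above as $-4\sigma_1(n)$, $-24\sigma_1(n/2)$, or $-8\sigma_1(n/2) - 16\sigma_1(n/4)$ according as $n$ is odd, $\equiv 2 \pmod 4$, or $\equiv 0 \pmod 4$. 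It then remains to unwind the right side, i.e. to compute $\sum_{r\in\mathbb{Z}} c(n - r^2/4,(0,0,r/2))$ and $\tfrac{1}{8}\sum_{r\in\mathbb{Z}} a(n - r^2/4,(0,0,r/2))(|r| - \sqrt{r^2 - 4n})$, where $c,a$ are the coefficients of $E_{3/2}(\tau;\tilde Q_2)$ and of its shadow. All sums involved are finite (for instance $c(m,\gamma) = 0$ for $m < 0$, and $a(n-r^2/4,(0,0,r/2))$ vanishes unless $r^2 - 4n$ is a perfect square).

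For the $c$-term I would split according to the parity of $r$. When $r = 2s$ is even, $(0,0,r/2) = (0,0,0)$, and Proposition 1 identifies $E_{3/2}(\tau;\tilde Q_2)_{(0,0,0)} = 1 - \sum_{k\ge 1}|\overline{\alpha}_2(k)|q^k$; hence $\sum_{s\in\mathbb{Z}} c(n - s^2,(0,0,0)) = -\sum_{r\in\mathbb{Z}}|\overline{\alpha}_2(n-r^2)| + 4\cdot[n = \square]$, the correction arising solely because the constant term $c(0,(0,0,0)) = 1$ disagrees with $-|\overline{\alpha}_2(0)| = -1$ at the two values $s = \pm\sqrt{n}$. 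When $r$ is odd, $(0,0,r/2) = (0,0,1/2)$, and by the previous lemma $E_{3/2}(\tau;\tilde Q_2)_{(0,0,1/2)} = -4\sum_{m\equiv 7\,(8)} H(m)q^{m/4}$; since $r^2 \equiv 1 \pmod 8$ for $r$ odd, the exponent $4n - r^2$ lands in the class $7 \pmod 8$ exactly when $n$ is even, so $\sum_{r\text{ odd}} c(n - r^2/4,(0,0,1/2))$ equals $-4\sum_{r\text{ odd}} H(4n - r^2)$ when $n$ is even and is $0$ when $n$ is odd.

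For the shadow term I would substitute the values of $a(n - r^2/4,(0,0,r/2))$ recorded just before the statement: it is $-8$ when $r^2 = 4n$, $-16$ when $r^2 - 4n$ is a positive square, and $0$ otherwise. The factor $\tfrac18$ turns these into $-1$ and $-2$, so $\tfrac18\sum_{r\in\mathbb{Z}} a(n - r^2/4,(0,0,r/2))(|r| - \sqrt{r^2 - 4n}) = -2\sum_{r^2 - 4n = \square}(|r| - \sqrt{r^2 - 4n}) + 4\sqrt{n}\cdot[n = \square]$, where the extra $4\sqrt{n}$ compensates for the terms $r = \pm 2\sqrt{n}$ carrying coefficient $-8$ rather than $-16$ (and contributing $|r| - \sqrt{r^2-4n} = 2\sqrt n$ apiece). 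Collecting the three pieces and moving everything except $b(n,(0,0))$ to one side gives, uniformly in $n$, $\sum_{r}|\overline{\alpha}_2(n-r^2)| + 4\,[2\mid n]\sum_{r\text{ odd}} H(4n-r^2) + 2\sum_{r^2-4n=\square}(|r| - \sqrt{r^2-4n}) = -b(n,(0,0)) + 4(\sqrt n + 1)\,[n = \square]$. Plugging in the three values of $-b(n,(0,0))$, and using that $n$ is never a square when $n\equiv 2 \pmod 4$ (so no square-correction appears there) while the $H$-sum is empty for $n$ odd, yields the three displayed identities; for $n$ odd the sum $\sum_{r^2-4n=\square}$ is what is written $\sum_{r\in\mathbb Z}$ in the statement, with $\sqrt{r^2-4n}$ understood to be a non-negative integer.

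The computation is essentially routine given the coefficient formula for $Q_{2,1,0}$, Proposition 1, and the previous lemma, so the main obstacle is purely bookkeeping: one must track simultaneously (i) the correction from the constant term $c(0,(0,0,0))$, (ii) the $-8$-versus-$-16$ discrepancy of the shadow at $r^2 = 4n$, and (iii) the mod-$8$ condition deciding which Hurwitz class numbers survive, and verify that (i) and (ii) combine into exactly the stated $-4(\sqrt n + 1)$ correction when $n$ is a square while (iii) produces the clean dichotomy between $n$ even (where the $H$-sum appears) and $n$ odd (where it does not).
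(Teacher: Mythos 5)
Your proposal is correct and follows the paper's own route: plug $\gamma = 0$ into the stated coefficient formula for $Q_{2,1,0} = E_2(\tau;Q)$, identify the even-$r$ and odd-$r$ parts of the $c$-sum via Proposition 1 and the preceding lemma, insert the listed shadow coefficients, and compare with the $q^n$-coefficient of $E_2(\tau;Q)_0$; the paper compresses this to ``comparing coefficients gives the formula,'' and your bookkeeping (the $+4$ constant-term correction, the $-8$ versus $-16$ discrepancy at $r^2 = 4n$ giving $+4\sqrt{n}$, and the mod $8$ condition eliminating the Hurwitz sum for odd $n$) is exactly the omitted verification, done correctly.
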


Since $$\sum_{r^2 - 4n = \square} \left( |r| - \sqrt{r^2 - 4n} \right) = 2 \sum_{d | n} \min(d, n/d) + \begin{cases} 4 \sqrt{n}: & n = \square; \\ 0: & \mathrm{otherwise}; \end{cases}$$ this implies proposition 2. \\

Corollary $3$ follows by algebraic manipulation from this and the identity (Corollary 1.1 of \cite{BL}) \begin{equation} |\overline{\alpha}_2(n)| = \begin{cases} r_3(n)/3 : & n \equiv 1,2 \, (4); \\ 8H(n) - r_3(n)/3: & n \equiv 0,3 \, (4); \end{cases} \end{equation} where $r_3(n)$ is the representation count of $n$ as a sum of three squares, using the identities $$\sum_r r_3(n-r^2) = 8 \sigma_1(n) - 32 \sigma_1(n/4)$$ (Jacobi's formula), and $$\sum_r (-1)^r r_3(n-r^2) = (-1)^{(n-1)/2} \cdot  4\sigma_1(n), \; \; n \; \mathrm{odd},$$ (which follows from Jacobi's formula) and the class number relations $$\sum_{r \in \mathbb{Z}} H(4n - r^2) = 2 \sigma_1(n) - 2 \lambda_1(n)$$ and $$\sum_{r \in \mathbb{Z}} H(n - r^2) = \frac{1}{3}\sigma_1(n) - \lambda_1(n), \; \; n \; \mathrm{odd},$$ the latter of which is due to Eichler. (See also Equation 1.3 of \cite{M}.)

\section{Proof of proposition 4}

This identity is derived analogously to Proposition 2, but it arises from the equality $Q_{2,2,0} = E_2$ for the dual Weil representation attached to the quadratic form $Q(x,y) = 2x^2 - y^2$. The discriminant group of this quadratic form has nonsquare order $8$, so $E_2(\tau;Q)$ is a true modular form; its $\mathfrak{e}_0$-component is $$E_2(\tau;Q)_0 = 1 - 18q - 34q^2 - 28q^3 - 66q^4 - 56q^5 - 60q^6 -  ... \in M_2(\Gamma_1(8))$$ in which the coefficient of $q^n$ is $$-8 \sigma_1(n,\chi) \cdot \Big( 2 + \frac{\chi(m)}{2^{\nu + 2}} \Big), \; \; n = 2^{\nu} m, \; m \; \mathrm{odd},$$ where $\chi$ is the Dirichlet character modulo $8$ given by $\chi(1) = \chi(7) = 1$, $\chi(3) = \chi(5) = -1$ and where $\sigma_1(n,\chi) = \sum_{d | n} \chi(n/d) d$. (This can be calculated using the coefficient formula of \cite{BK}, for example.) \\

The relevant components of the weight $3/2$ Eisenstein series for $Q_2(x,y,z) = 2x^2 - y^2 + 2z^2$ have Fourier series beginning \begin{align*} E_{3/2}(\tau;Q_2)_{(0,0,0)} &= 1 - 2q - 4q^2 - 4q^4 - 8q^5 - ... \\ E_{3/2}(\tau;Q_2)_{(0,0,1/2)} &= -2q^{1/2} - 4q^{3/2} - 4q^{5/2} - 8q^{7/2} - 6q^{9/2} - ... \\ E_{3/2}(\tau)_{(0,0,1/4)} = E_{3/2}(\tau)_{(0,0,3/4)} &= -4q^{7/8} - 4q^{15/8} - 4q^{23/8} - 8q^{31/8} - 4q^{39/8} - ... \end{align*} The general term is as follows:

\begin{lem} (i) The coefficient of $q^{n/2}$ in $E_{3/2}(\tau;Q_2)_{(0,0,1/2)}$ is $(-1/2)$ times the number of representations of $2n$ by the quadratic form $4a^2 + b^2 + c^2$. \\ (ii) The coefficient of $q^{n/8}$ in $E_{3/2}(\tau;Q_2)_{(0,0,1/4)}$ is $(-1/2)$ times the number of representations of $n$ by the quadratic form $4a^2 + 2b^2 + c^2$.
\end{lem}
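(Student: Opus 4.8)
The plan is to argue exactly as in the proof of Lemma 7. The key first point is that, although $E_{3/2}(\tau;Q_2)$ is only a mock modular form, its $\mathfrak{e}_{(0,0,1/2)}$- and $\mathfrak{e}_{(0,0,1/4)}$-components are genuine holomorphic modular forms of weight $3/2$: the shadow of $E_{3/2}^*(\tau,0;Q_2)$ was determined in Section 3 to be $-4\vartheta_1(\tau)-4\vartheta_2(\tau)$, and neither $\mathfrak{e}_{(0,0,1/2)}$ nor $\mathfrak{e}_{(0,0,1/4)}$ occurs in the support of $\vartheta_1$ or $\vartheta_2$ (consistently, these components carry exponents in $\tfrac12+\mathbb{Z}$, resp.\ $\tfrac78+\mathbb{Z}$, where no unary theta series of the relevant level can contribute). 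Since a mock modular form with vanishing shadow is holomorphic, $E_{3/2}(\tau;Q_2)_{(0,0,1/2)}$ and $E_{3/2}(\tau;Q_2)_{(0,0,1/4)}$ are ordinary modular forms of weight $3/2$ on a congruence subgroup whose level and multiplier system are read off from the Gram matrix of $Q_2$.

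On the other side, the quantities in the statement are, after rescaling $\tau$, the parts supported at exponents $\equiv 2 \pmod 4$, resp.\ $\equiv 7 \pmod 8$, of the theta series of the ternary lattices with Gram matrices $\mathrm{diag}(8,2,2)$ and $\mathrm{diag}(8,4,2)$; these are again holomorphic modular forms of weight $3/2$ of the same level. There are then two ways to conclude. The cleaner one is to rescale the variable so that both sides transform under a common group $\Gamma_0(N)$ with the same half-integral weight multiplier and to compare finitely many Fourier coefficients using the valence (Sturm) bound, exactly as at the end of Section 3; the coefficients displayed just before the statement already agree. The second, more in the spirit of Lemma 7, is to evaluate the coefficient formula of \cite{BK}, \cite{W1} directly: its archimedean factor, the divisor-type factor $\sum_{d\mid f}\mu(d)\chi_{\mathcal{D}}(d)d^{-1}\sigma_{-1}(f/d)$, and all local factors at odd primes coincide with those of the corresponding ternary theta series, so the only substantive computation is the evaluation of $L_2\bigl(n,(0,0,1/2),2+2s\bigr)$ and $L_2\bigl(n,(0,0,1/4),2+2s\bigr)$ in closed form --- for instance via the $p$-adic generating functions of \cite{CKW} --- followed by the limit $s\to 0$; since $4a^2+b^2+c^2$ and $4a^2+2b^2+c^2$ are alone in their genus, the resulting product of local densities is the representation count, and matching the overall normalization constant against the Siegel mass of the ternary form produces the factor $-\tfrac12$.

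The step I expect to be the main obstacle is the local computation at $p=2$ --- precisely the point flagged as the fiddliest in the proof of Lemma 7 --- together with the routine but error-prone bookkeeping of normalizations: matching the component exponent $n/2$ (resp.\ $n/8$) against a representation count of $2n$ (resp.\ of $n$), keeping track of the factor $|A(Q_2)|^{1/2}$ and the fractional power of $n$ in the coefficient formula, and, on the first route, pinning down the level and multiplier precisely enough to make the Sturm bound effective. No genuinely new idea beyond those already present in Section 3 and in the proof of Lemma 7 is required.
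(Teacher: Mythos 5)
Your proposal is correct and follows essentially the same route as the paper: the paper's proof likewise observes that $\mathfrak{e}_{(0,0,1/2)}$ and $\mathfrak{e}_{(0,0,1/4)}$ do not occur in the shadow, so these components are genuine weight-$3/2$ modular forms, and then notes that the conjectured equality with the theta series can be verified by comparing finitely many coefficients, mentioning the direct Lemma-7-style local computation only as an alternative. Your extra remarks (the rescaled theta series, the genus-one observation for the direct route) are consistent elaborations rather than a different method.
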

\begin{proof} The components $E_{3/2}(\tau;Q_2)_{(0,0,1/2)}$ and $E_{3/2}(\tau;Q_2)_{(0,0,1/4)}$ are modular forms because the components $\mathfrak{e}_{(0,0,1/2)}$ and $\mathfrak{e}_{(0,0,1/4)}$ do not appear in the shadow $\vartheta(\tau).$ Once an equality between two modular forms has been conjectured (here, the components of $E_{3/2}(\tau;Q_2)$ and two theta series), it can always be proved by comparing a finite number of coefficients. In principle this could also be proven directly via the same argument as Lemma 7.
\end{proof}

The coefficient formula for the $\mathfrak{e}_0$-component of the index-$2$ series $$Q_{2,2,0}(\tau)_0 = \sum_{n=0}^{\infty} b(n) q^n$$ is now $$b(n) = \sum_{r \in \mathbb{Z}} c(n- r^2 / 8, (0,0,r/4)) + \frac{1}{8 \sqrt{2}} \sum_{r \in \mathbb{Z}} a(n-r^2 / 8, (0,0,r/4)) \Big( |r| - \sqrt{r^2 - 8n} \Big),$$ where $c(n,\gamma)$ is the coefficient of $q^n \mathfrak{e}_{\gamma}$ in the mock Eisenstein series above and $a(n,\gamma)$ is the coefficient of $q^{-n} \mathfrak{e}_{\gamma}$ in its shadow. Therefore, $$\sum_{r \in \mathbb{Z}} c(n-r^2 / 8, (0,0,r/4)) = -\sum_{r \in \mathbb{Z}} |\overline{\alpha}_2(n - 2r^2)| - \frac{1}{2}\sum_{r\, \mathrm{odd}} \Big( r_A(4n - 2r^2) + r_B(8n - r^2) \Big) + \begin{cases} 4: & 2n = \square; \\ 0: & \mathrm{otherwise}. \end{cases}$$ Here, $r_A(n)$ is the representation count of $n$ by $4a^2 + b^2 + c^2$ and $r_B(n)$ is the representation count of $n$ by $4a^2 + 2b^2 + c^2$. \\

\begin{rem} The generating function of the coefficients $\sum_{r \, \mathrm{odd}} r_A(n - 2r^2)$ is the difference of theta functions for the quadratic forms $4a^2 + b^2 + c^2 + 2d^2$ and $4a^2 + b^2 + c^2 + 4d^2$. In particular, $\sum_{n=0}^{\infty} \sum_{r \, \mathrm{odd}} r_A(4n - 2r^2) q^n$ is a modular form of weight $2$. By computing enough terms we can identify it as the eta product $$\sum_{n=0}^{\infty} \sum_{r \, \mathrm{odd}} r_A(4n-2r^2) q^n = 8q + 16q^2 + 16q^3 + 32q^4 + ... = \frac{8 \eta(2\tau)^3 \eta(4\tau) \eta(8\tau)^2}{\eta(\tau)^2}.$$ Similarly, $$\sum_{n=0}^{\infty} \sum_{r \, \mathrm{odd}} r_B(8n - r^2) q^n = 16q + 32 q^2 + 32q^3 + 64q^4 + ... = \frac{16 \eta(2\tau)^3 \eta(4\tau) \eta(8\tau)^2}{\eta(\tau)^2}.$$ The eta product $\frac{\eta(2\tau)^3 \eta(4 \tau) \eta(8\tau)^2}{\eta(\tau)^2}$ is one of the few such products with multiplicative coefficients, as classified by Martin \cite{Ma}, and its coefficient of $q^n$ is the twisted divisor sum $\sigma_1(n,\chi) = \sum_{d | n} \chi(n/d) d$ for the character $\chi(1) = \chi(7) = 1$, $\chi(3) = \chi(5) = -1$ mod $8$ that we consider throughout. Therefore, we can simplify the above sum to $$\sum_{r \in \mathbb{Z}} c(n-r^2 / 8, (0,0,r/4)) = -\sum_{r \in \mathbb{Z}} |\overline{\alpha}_2(n - 2r^2)| - 12 \sigma_1(n,\chi) + \begin{cases} 4: & 2n = \square; \\ 0: & \mathrm{otherwise}. \end{cases}$$
\end{rem}

The correction term $$\frac{1}{8 \sqrt{2}} \sum_{r \in \mathbb{Z}} a(n-r^2 / 8, (0,0,r/4)) \Big(|r| - \sqrt{r^2 - 8n}\Big) = -\sqrt{2} \sum_{\substack{r \in \mathbb{Z} \\ 2(r^2 - 8n) = \square}} \Big(|r| - \sqrt{r^2 - 8n} \Big) \times \begin{cases} 1: & r^2 \ne 8n; \\ 1/2: & r^2 = 8n; \end{cases}$$  is less straightforward to calculate than the corresponding term in the proof of Proposition 2 because the discriminant $8$ is not square. Following section $7$ of \cite{W2}, this term can be calculated by finding minimal solutions to the Pell-type equation $a^2 - 8b^2 = -64n$.\\

 The true Pell equation $a^2 - 8b^2 = 1$ has fundamental solution $a = 3, b = 1$. We let $\mu_i = a_i + b_i \sqrt{8}$, $i \in \{1,...,N\}$ denote the representatives of orbits of elements in $\mathbb{Z}[\sqrt{2}]$ up to conjugation having norm $2n$ and minimal positive trace; then $$\frac{1}{\sqrt{2}} \sum_{2(r^2 - 8n) = \square} \Big[ \Big( |r| - \sqrt{r^2 - 8n} \Big) \times \begin{cases} 1: & r^2 - 8n \ne 0; \\ 1/2: & r^2 - 8n = 0; \end{cases} \Big] = -4 \sum_{i=1}^N \Big( |b_i| - \frac{a_i}{2} \Big) \times \begin{cases} 2: & \overline{\mu_i} / \mu_i \notin \mathbb{Z}[\sqrt{2}]; \\ 1: & \overline{\mu_i} / \mu_i \in \mathbb{Z}[\sqrt{2}]. \end{cases}$$ 

Since $\mathbb{Q}(\sqrt{2})$ has class number one, these orbits correspond to the ideals of $\mathbb{Z}[\sqrt{2}]$ with ideal norm $2n$ and we can write $$- \sqrt{2} \sum_{2 (r^2 - 8n) = \square} \Big[ \Big(|r| - \sqrt{r^2 - 8n} \Big)  \times \begin{cases} 1 : & r^2 - 8n \ne 0; \\ 1/2 : & r^2 - 8n = 0; \end{cases} \Big] = 4 \sum_{N(\mathfrak{a}) = 2n} \Big( |b| - a \Big),$$ where $\mathfrak{a}$ runs through the ideals of $\mathbb{Z}[\sqrt{2}]$ of norm $2n$ and $a + b \sqrt{2} \in \mathfrak{a}$ is a generator with minimal $a > 0$. \\

Comparing coefficients between $Q_{2,2,0}(\tau)_0$ and $E_2(\tau;Q)_0$ results in the identity \begin{align*} \sum_{r \in \mathbb{Z}} |\overline{\alpha}_2(n - 2r^2)| &= 2 \sigma_1(n,\chi) \cdot \Big( 2 + \frac{\chi(m)}{2^{\nu}} \Big) + 4 \sum_{N(\mathfrak{a}) = 2n} \Big( |b| - a \Big) + \begin{cases} 4: & 2n = \square; \\ 0: & \mathrm{otherwise}; \end{cases} \end{align*} as claimed, where $a + b \sqrt{2} \in \mathfrak{a}$ is a generator with minimal $a > 0$.

\begin{ex} Let $n = 7$. The ideals of $\mathbb{Z}[\sqrt{2}]$ of norm $14$ are $(4 \pm \sqrt{2})$ and the trace $8$ is minimal within both ideals. The left side of lemma 11 is $$\sum_{r \in \mathbb{Z}} |\overline{\alpha}_2(7 - 2r^2)| = |\overline{\alpha}_2(7)| + 2 \cdot |\overline{\alpha}_2(5)| = 24,$$ while the right side is $$2 \sigma_1(7,\chi) (2 + \chi(7)) + 4 (1 - 4) + 4 (1-4) = 48 - 12 - 12 = 24.$$
\end{ex}

\begin{rem}
When $n = p$ is a prime that remains inert in $\mathbb{Z}[\sqrt{2}]$ (i.e. $\chi(p) = -1$) this identity simplifies to $$\sum_{r \in \mathbb{Z}} |\overline{\alpha}_2(p - 2r^2)| = 2(p-1).$$
\end{rem}

To prove Corollary 5, we again use equation (4). In the first case, we start from $$\sum_r |\overline{\alpha}_2(4n - 2r^2)| = 8\sum_{r \in \mathbb{Z}} H(4n - 2r^2) - \frac{1}{3} \sum_{r \in \mathbb{Z}} (-1)^r r_3(4n - 2r^2).$$ The generating function $$\sum_{n=0}^{\infty} \sum_{r \in \mathbb{Z}} (-1)^r r_3(4n - 2r^2) q^n = 1 - 18q - 34q^2 - 28q^3 - 66q^4 - 56q^5 - 60q^6 - ...$$ is a difference of theta functions and therefore a modular form of level $8$; and we identify it as $E_2(\tau)_0$, giving the identity $$\sum_{r \in \mathbb{Z}} (-1)^r r_3(4n - 2r^2) = -8 \sigma_1(n,\chi) \cdot \Big( 2 + \frac{\chi(m)}{2^{\nu + 2}} \Big), \; \; n = 2^{\nu} m, \; m \; \mathrm{odd}.$$ Therefore, \begin{align*} \sum_{r \in \mathbb{Z}} H(4n - 2r^2) &= -\frac{1}{3} \sigma_1(n,\chi) \cdot \Big( 2 + \frac{\chi(m)}{2^{\nu+2}} \Big) + \frac{1}{4} \sigma_1(4n,\chi) \cdot \Big( 2 + \frac{\chi(m)}{2^{\nu + 2}} \Big) + \frac{1}{2} \sum_{N(\mathfrak{a}) = 8n} \Big( |b| - a \Big) \\ &= \frac{2}{3} \sigma_1(n,\chi) \cdot \Big( 2 + \frac{\chi(m)}{2^{\nu + 2}} \Big) + \frac{1}{2} \sum_{N(\mathfrak{a}) = 8n} \Big(|b| - a \Big). \end{align*}

In the second case, for odd $n$, $$\sum_r |\overline{\alpha}_2(2n - 2r^2)| = 8\sum_{r \in \mathbb{Z}} H(2n - 2r^2) + \frac{1}{3} \sum_{r \in \mathbb{Z}} (-1)^r r_3(2n - 2r^2).$$ Here, $$\sum_{r \in \mathbb{Z}} (-1)^r r_3(2n - 2r^2) = (8 + 2 \chi(n)) \cdot \sigma_1(n,\chi), \; \; n \; \mathrm{odd},$$ so \begin{align*} \sum_{r \in \mathbb{Z}} H(2n - 2r^2) &= -\frac{8 + 2 \chi(n)}{24} \sigma_1(n,\chi) + \frac{1}{4} \sigma_1(2n,\chi) \cdot \Big( 2 + \frac{\chi(n)}{2} \Big) + \frac{1}{2} \sum_{N(\mathfrak{a}) = 4n} \Big( |b| - a \Big) \\ &= \frac{4 + \chi(n)}{6} \sigma_1(n,\chi) + \frac{1}{2} \sum_{N(\mathfrak{a}) = 4n} \Big( |b| - a\Big). \end{align*}

In the third case, for odd $n$, $$\sum_r |\overline{\alpha}_2(n-2r^2)| = 8 \sum_{r \in \mathbb{Z}} H(n - 2r^2) - \frac{\chi(n)}{3} \sum_{r \in \mathbb{Z}} (-1)^r r_3(n - 2r^2),$$ where $$\sum_{r \in \mathbb{Z}} (-1)^r r_3(n - 2r^2) = \begin{cases} 6 \sigma_1(n,\chi): & n \equiv 1 \, (8); \\ -2 \sigma_1(n,\chi): & n \equiv 3 \, (8); \\ 2 \sigma_1(n,\chi): & n \equiv 5 \, (8); \\ -6 \sigma_1(n,\chi): & n \equiv 7 \, (8); \end{cases}$$ and therefore $$\sum_{r \in \mathbb{Z}} H(n - 2r^2) = \frac{2 + \chi(n)}{6} \sigma_1(n,\chi)  + \frac{1}{2} \sum_{N(\mathfrak{a}) = 2n} \Big( |b| - a \Big).$$

\textbf{Acknowledgments:} I thank the reviewer for many suggestions regarding the exposition of this note.

\bibliographystyle{plainnat}
\bibliofont
\bibliography{\jobname}

\end{document}